\newtheorem{thm}{Theorem}[section]
\newtheorem{cor}[thm]{Corollary}
\newtheorem{lem}[thm]{Lemma}
\newtheorem{rem}[thm]{Remark}
\newtheorem{example}[thm]{Example}
\theoremstyle{definition}
\newtheorem{defn}[thm]{Definition}
\newtheorem{TheoA}{\bf Theorem A}
\newtheorem{TheoB}{\bf Theorem B}
\newtheorem{CorB}{\bf Corollary B1}
\newtheorem{CorBB}{\bf Corollary B2}
\numberwithin{equation}{section}
\newcommand{\A}{\Omega}
\newcommand{\Z}{\mathbf{Z}}
\newcommand{\cU}{\mathcal{U}}
\newcommand{\N}{\mathbf{N}}
\newcommand{\R}{\mathbf{R}}
\newcommand{\C}{\mathbf{C}}
\newcommand{\SL}{\mathrm{SL}}
\newcommand{\SO}{\mathrm{SO}}
\newcommand{\PSL}{\mathrm{PSL}}
\newcommand{\manifold}{\Pi}
\DeclareMathOperator{\Tr}{Tr}
\DeclareMathOperator{\tr}{tr}
\DeclareMathOperator{\Ad}{Ad}
\DeclareMathOperator{\Aff}{Aff}
\DeclareMathOperator{\ad}{ad}
\newcommand{\vect}{\operatorname{span}}
\newcommand{\diff}{\mathrm{Diff}}
\newcommand{\cL}{\mathcal{L}}
\def\G{\mathrm{G}}
\def\1{\mathbf{1}}
\newcommand{\GL}{\textrm{GL}}
\newcommand{\norm}[2]{\left\|#1\right\|_{#2}}
\newcommand{\fin}{\hspace*{\fill} $\square$ \vskip0.2cm}
\renewcommand{\thefootnote}{\dag}
\begin{document}
\null

\vskip-25pt

\null

\begin{center}
{\huge The local geometry of \\ idempotent Schur multipliers}

\vskip7pt

{\sc {Javier Parcet, Mikael de la Salle and Eduardo Tablate}}
\end{center}

\title[Idempotent Schur multipliers]{}

\author[J. Parcet, M. de la Salle, E. Tablate]{}

\maketitle

\null

\vskip-58pt

\null

\begin{center}
{\large {\bf Abstract}}
\end{center}

\vskip-28pt

\null

\begin{abstract}
A Schur multiplier is a linear map on matrices which acts on its entries by multiplication with some function, called the symbol. We consider idempotent Schur multipliers, whose symbols are indicator functions of smooth Euclidean domains. Given $1<p\neq 2<\infty$, we provide a local characterization (under some mild transversality condition) for the boundedness on Schatten $p$-classes of Schur idempotents in terms of a lax notion of boundary flatness. We prove in particular that all Schur idempotents are modeled on a single fundamental example: the triangular projection. As an application, we fully characterize the local $L_p$-boundedness of smooth Fourier idempotents on connected Lie groups. They are all modeled on one of three fundamental examples: the classical Hilbert transform, and two new examples of Hilbert transforms that we call affine and projective. Our results in this paper are vast noncommutative generalizations of Fefferman's celebrated ball multiplier theorem. They confirm the intuition that Schur multipliers share profound similarities with Euclidean Fourier multipliers |even in the lack of a Fourier transform connection| and complete, for Lie groups, a longstanding search of Fourier $L_p$-idempotents.
\end{abstract}

\maketitle

\addtolength{\parskip}{+1ex}

\section*{\bf \large Introduction}

Schur multipliers are linear maps on matrix algebras with a great impact on geometric group theory, operator algebras, and functional analysis. Their definition is rather simple on discrete spaces $S_M(A) = (M(j,k) A_{jk})_{jk}$. It easily extends to nonatomic $\sigma$-finite measure spaces $(\Omega,\mu)$, by restricting to operators $A$ in $L_2(\Omega,\mu)$ admitting a kernel representation over $\Omega \times \Omega$. Their role in geometric group theory and operator algebras was first analyzed by Haagerup. His pioneering work on free groups \cite{H} and the research thereafter on semisimple lattices \cite{DCH,CH} encoded deep geometric properties via approximation properties with Schur multipliers. Other interesting links can be found in \cite{Bennett, PisAJM, PisSim, PisBAMS, PisS, PS}. 

In 2011, stronger rigidity properties of high rank lattices were discovered by studying $L_p$-approximations \cite{dLdlS,LdlS}. In first place, there are no $L_p$-approximations by means of Fourier or Schur multipliers over $\SL_n(\R)$ for $p > 2+ \alpha_n$, with $\alpha_n \to 0$ as $n \to \infty$. Secondly, it turns out that this unprecedented pathology leads to a strong form of nonamenability which is potentially useful to distinguish the group von Neumann algebras of $\PSL_n(\Z)$ for different values of $n \ge 3$, the most iconic form of Connes' rigidity conjecture. This has strongly motivated our recent work \cite{CGPT1,PRS} with several forms of the H\"ormander-Mikhlin theorem. Nevertheless, there is still much to learn about less regular multipliers. A key point in \cite{LdlS} was a careful analysis of Schur multipliers over the $n$-sphere for symbols of the form $M_\varphi(x,y) = \varphi(\langle x,y \rangle)$. More precisely, the boundedness of $S_{M_\varphi}$ on the Schatten class $S_p$ for $p > 2 + \frac{2}{n-1}$ imposes H\"older regularity conditions on $\varphi$. This article grew from the analysis of the \emph{spherical Hilbert transform} 
\begin{equation*}\label{eq:sphericalHilbertTransform}
    H_\mathbf{S}: A \mapsto \Big( \mathrm{sgn} \langle x,y \rangle A_{xy} \Big)_{xy}.
\end{equation*} 

Is it $S_p$-bounded for some $\frac{2n}{n+1} < p \neq 2 < \frac{2n}{n-1}$? It is worth noting the analogy with the ball multiplier problem, which was only known to be unbounded for $p$ outside this range before Fefferman's celebrated contribution \cite{Fe}. Our main result completely solves this problem: $H_\mathbf{S}$ is $S_p$-unbounded unless $n=1$ or $p=2$. In fact we characterize $S_p$-boundedness for a much larger class of idempotents. 

Let $M,N$ be two differentiable manifolds with the Lebesgue measure coming from any Riemmanian structure on them. Consider a $\mathcal{C}^1$-domain $\Sigma \subset M \times N$ so that its boundary $\partial \Sigma$ is a smooth hypersurface, which is locally represented by level sets of some real-valued $\mathcal{C}^1$-functions with nonvanishing gradients. We say that $\partial \Sigma$ is transverse at a point $(x,y)$ when the tangent space of $\partial \Sigma$ at $(x,y)$ maps surjectively on each factor $T_x M$ and $T_y N$. In that case, both sections $$\partial \Sigma_x = \big\{ y' \in N \mid (x,y') \in \partial \Sigma \big\} \quad \mbox{and} \quad \partial \Sigma^y = \big\{ x' \in M \mid (x',y) \in \partial \Sigma \big\}$$ become codimension $1$ manifolds on some neighbourhood of $y$ and $x$ respectively. 

\begin{TheoA} \label{ThmA} 
\emph{Let $p \in (1,\infty) \setminus \{2\}$ and consider a $\mathcal{C}^1$-domain $\Sigma \subset M \times N$. Then the following statements are equivalent for any transverse point $(x_0,y_0) \in \partial \Sigma \hskip-2pt :$}  
\begin{enumerate}
\item\label{item:Spbounded}  $S_p$-boundedness. \emph{The idempotent Schur multiplier $S_\Sigma$ whose symbol equals $1$ on $\Sigma$ and $0$ elsewhere is bounded on $S_p(L_2(U), L_2(V))$ for some pair of neighbourhoods $U,V$ of $x_0, y_0$ in $M,N$.}

\vskip2pt

\item\label{item:SteinCurvature} Zero-curvature condition. \emph{There are neighbourhoods $U,V$ of $x_0, y_0$ in $M,N$ such that the tangent spaces $T_y (\partial \Sigma_{x_1})$ and $T_y (\partial \Sigma_{x_2})$ coincide for any pair of points $(x_1,y), (x_2,y) \in \partial \Sigma \cap (U \times V)$.} 

\vskip2pt

\item\label{item:HilbertTransform} Triangular truncation representation. \emph{There are neighbourhoods $U,V$ of the points $x_0, y_0$ in $M,N$ and $\mathcal{C}^1$-functions $f_1: U \to \R$ and $f_2: V \to \R$, such that $\Sigma \cap (U \times V) = \big\{(x,y) \in U \times V: f_1(x)>f_2(y) \big\}$.}
\end{enumerate}
\end{TheoA}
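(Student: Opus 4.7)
The plan is to prove the cycle $(3) \Rightarrow (2)$, $(3) \Rightarrow (1)$, $(2) \Rightarrow (3)$, and finally the main implication $(1) \Rightarrow (2)$. The first two are essentially direct; $(2) \Rightarrow (3)$ is a local foliation argument; $(1) \Rightarrow (2)$ is a Schur-multiplier counterpart of Fefferman's ball multiplier theorem and carries the bulk of the work.

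\textbf{Easy implications.} For $(3) \Rightarrow (2)$, note that if $\Sigma \cap (U \times V) = \{f_1(x) > f_2(y)\}$ then $\partial \Sigma_x = \{y : f_2(y) = f_1(x)\}$ has tangent $\ker df_2(y)$ at $y$, independent of $x$. For $(3) \Rightarrow (1)$, observe that the transversality of $\partial \Sigma$ at $(x_0, y_0)$ forces $df_1(x_0) \neq 0$ and $df_2(y_0) \neq 0$. I would choose complementary $\mathcal{C}^1$-coordinates $\pi : U \to U_0$ and $\rho : V \to V_0$ so that $x \mapsto (f_1(x), \pi(x))$ and $y \mapsto (f_2(y), \rho(y))$ become $\mathcal{C}^1$-diffeomorphisms onto $I \times U_0$ and $J \times V_0$. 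Under the induced Jacobian-weighted unitaries on $L_2$, the symbol $\1_\Sigma$ becomes $\1_{\{s > t\}}(s,t) \otimes 1$, so $S_\Sigma$ is unitarily equivalent to the amplified triangular truncation $T \otimes \mathrm{id}_{L_2(U_0) \otimes L_2(V_0)}$. The $S_p$-boundedness of $T$ for $p \in (1,\infty)$ is classical (Macaev/Gohberg--Krein, or equivalently the vector-valued Hilbert transform), and amplification preserves the norm.

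\textbf{From zero-curvature to triangular form.} For $(2) \Rightarrow (3)$, transversality together with the implicit function theorem guarantees that the projection $\partial \Sigma \cap (U \times V) \to V$ covers a neighborhood $V'$ of $y_0$. On $V'$ the zero-curvature condition unambiguously defines a codimension-$1$ distribution $\tau(y) := T_y \partial \Sigma_x$ (any admissible $x$), which is $\mathcal{C}^1$ by implicit differentiation of a local defining equation for $\partial \Sigma$. Each slice $\partial \Sigma_x$ is already an integral hypersurface, so $\tau$ is involutive, and Frobenius produces a $\mathcal{C}^1$ primitive $f_2 : V' \to \R$ whose level sets foliate $V'$ and include every $\partial \Sigma_x$. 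Hence $\partial \Sigma_x = \{f_2 = c(x)\}$ for a $\mathcal{C}^1$ function $c$. Setting $f_1 := c$ and fixing the orientation of $f_2$ so that $\Sigma$ lies on the correct side yields the representation.

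\textbf{The hard direction.} The implication $(1) \Rightarrow (2)$ is where the main obstacle lies. I would argue by contraposition: assume transversality but not zero-curvature. Then there exist $(x_1, y), (x_2, y) \in \partial \Sigma$ arbitrarily close to $(x_0, y_0)$ with $T_y \partial \Sigma_{x_1} \neq T_y \partial \Sigma_{x_2}$. Applying local $\mathcal{C}^1$-diffeomorphisms on each factor (which respect $S_p$-boundedness of Schur idempotents up to uniform constants) together with a small rescaling, the picture should reduce to a model configuration inside $\partial \Sigma$: two smooth sheets meeting at a point with distinct tangent hyperplanes in the $y$-variable. Using this model, I would construct Kakeya-like test kernels supported on thin rotated slabs aligned with the varying tangent directions, so that $S_\Sigma$ acts on them essentially as a rotational Hilbert transform over a circle of angles. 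The Besicovitch/Fefferman construction would then provide a lower bound for $\|S_\Sigma\|_{S_p \to S_p}$ that diverges with the number of directions, contradicting local boundedness for every $p \neq 2$. The delicate points will be (i) maintaining the reduction under mere $\mathcal{C}^1$-regularity rather than smoothness, and (ii) transferring the $S_p$-unboundedness of the spherical Hilbert transform mentioned in the introduction (from \cite{LdlS, CGPT1, PRS}) into this local, non-equivariant setting. Step (ii) is where I expect the full strength of the paper's new machinery to be deployed.
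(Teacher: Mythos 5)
Your easy implications $(3)\Rightarrow(2)$ and $(3)\Rightarrow(1)$ are correct and coincide with the paper's route (change of variables for Schur symbols plus the classical triangular projection). The two substantive steps, however, each have a genuine gap.

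For $(2)\Rightarrow(3)$ you invoke Frobenius on the distribution $\tau(y)=T_y\partial\Sigma_x$. But the hypothesis is only that $\Sigma$ is a $\mathcal{C}^1$-domain, so a local defining function is $\mathcal{C}^1$ and $\tau$ is merely continuous; involutivity is not even defined for a $\mathcal{C}^0$ plane field, the classical Frobenius theorem does not apply, and continuous distributions admitting an integral manifold through each point need not have unique integral manifolds nor a $\mathcal{C}^1$ first integral (this is the Peano non-uniqueness phenomenon). You also need, and do not prove, that two slices $\partial\Sigma_{x_1},\partial\Sigma_{x_2}$ which are tangent at a common point actually lie in the same leaf. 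This is exactly the difficulty the authors flag (they say they found no straightforward proof). Their argument sidesteps Frobenius entirely: a normal form (Lemma~\ref{lem:normal_form}) puts $\partial\Sigma$ as $\{x_1=g(\tilde x,y)\}$ with $g(0,y)=y_1$, so that one distinguished slice is flat; the zero-curvature condition compared against that slice forces $\ker d_yg(\tilde x,y)=\operatorname{span}(e_2,\dots,e_n)$ to be \emph{constant}, whence $\partial_{y_j}g=0$ for $j\ge 2$ on a square neighbourhood and $g$ depends on $y$ only through $y_1$. That elementary step is what makes the proof work at $\mathcal{C}^1$ regularity; your foliation argument as written does not.

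For $(1)\Rightarrow(2)$ you correctly identify Fefferman's Besicovitch construction as the engine, but the plan to build ``Kakeya-like test kernels'' directly inside $S_p$ is not a proof and, more importantly, misses the mechanism that makes the reduction possible. The paper's Lemma~\ref{prop:Meyer_Lemma_NC} proceeds in two precise steps: (a) a blow-up $(\xi,\eta)\mapsto(x_j+\varepsilon T_j\xi,\,y+\varepsilon\eta)$, legitimate by the change-of-variables Lemma~\ref{lem:transfere}, shows that the $S_p$-bound for $\chi_\Sigma$ passes to the a.e.\ limit symbol $\frac12(1+\mathrm{sgn}\langle\eta-\xi,u_j\rangle)$ with $u_j=\mathbf{n}_2(x_j,y)$, simultaneously for finitely many $j$; (b) the ultraproduct embedding $j_p\colon L_p(\R^n)\to\prod_\cU S_p$ intertwining Fourier and Schur multipliers converts this into the classical vector-valued square-function inequality for the directional Hilbert transforms $H_{u_j}$ on $L_p(\R^n)$, at which point Fefferman's theorem (plus de Leeuw to reduce to $n=2$) applies verbatim to a continuum of directions. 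Without step (b) you cannot quote Fefferman; you would have to redo the Besicovitch argument in Schatten classes, which is not available. Finally, your suggestion to ``transfer the $S_p$-unboundedness of the spherical Hilbert transform'' from earlier work is circular: that unboundedness is a corollary of Theorem~A in this paper, not an input to it.
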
 

Theorem A characterizes the local geometry of $S_p$-bounded idempotent Schur multipliers and vastly amplifies the ball multiplier theorem \cite{Fe}. A first interesting consequence is that this property does not depend on the value of $p$. The implication \eqref{item:HilbertTransform}$\Rightarrow$\eqref{item:Spbounded} is the easy one. Using known techniques for Schur multipliers, it follows from the classical $S_p$-boundedness of the triangular projection $(A_{jk}) \mapsto (\chi_{j \geq k} A_{jk})$ with $j,k \in \N$, closely related to the $L_p$-boundedness of the Hilbert transform. On the contrary, the converse implication \eqref{item:Spbounded}$\Rightarrow$\eqref{item:HilbertTransform} is certainly unexpected. It says that the triangular projection is the only local model for $S_p$-bounded idempotent multipliers. Our proof decomposes in two completely independent parts. First the implication \eqref{item:Spbounded}$\Rightarrow$\eqref{item:SteinCurvature} is very much analytical. Due to well-known Fourier-Schur transference results for Toeplitz symbols, Fefferman's theorem corresponds to the special case $M=N=\R^n$ and domains of the form $\Sigma = \big\{ (x,y): x-y \in \Omega \big\}$ for a Euclidean $\mathcal{C}^1$-domain $\Omega$. Transversality trivially holds at every boundary point in this case. In the general case, the main idea |Lemma \ref{prop:Meyer_Lemma_NC}| is a new connection between Schur and Fourier multipliers: it gives an $L_p$ square-function inequality for directional Hilbert transforms out of the $S_p$-boundedness of $S_\Sigma$. This is a noncommutative form of Meyer's lemma, which derived such a square inequality from $L_p$-bounded Fourier multipliers, and which was a key part in the proof of the ball multiplier theorem. On the other hand, the implication \eqref{item:SteinCurvature}$\Rightarrow$\eqref{item:HilbertTransform} is a purely geometric statement about hypersurfaces in product manifolds |Theorem~\ref{thm:hypersurfaces_in_products}| to which we failed to find a straightforward proof.  

It is rather surprising to us that Theorem A holds for Schur multipliers on general manifolds which |contrary to Euclidean spaces, where Fefferman's result held so far| lack to admit a Fourier transform connection. Also observe that, when we take $M=N=\R^n$ and  write $$\mathbf{n}(x,y)  = \big( \mathbf{n}_1(x,y),\mathbf{n}_2(x,y) \big)$$ for a normal vector to $\partial \Sigma$ at $(x,y)$, transversality means that both $n$-dimensional components $\mathbf{n}_1, \mathbf{n}_2$ are nonzero. The zero-curvature condition means that $\mathbf{n}_2(x_1,y)$ and $\mathbf{n}_2(x_2,y)$ are parallel|equivalent forms in terms of $\mathbf{n}_1(x,y_1)$ and $\mathbf{n}_1(x,y_2)$ instead, or simpler formulations for $\mathcal{C}^2$-domains will be also discussed. In a different direction, a global (nonlocal) characterization of $S_p$-bounded idempotent Schur multipliers also follows for relatively compact fully transverse domains $\Sigma$. 

\begin{figure}
  \begin{tikzpicture}[scale=1.7]
%%% Parameters to play with:
    \newcommand\exc{.25};%the size of the equator, has to be less than 1
    \newcommand\ccc{.6};%Draws two circles that meet at a point (0,sqrt(1-ccc),sqrt(ccc))
    \newcommand\rr{.7};%% distance of the first circle to the equator (measured in the sine of the angle to the center
    \newcommand\rrr{.65};%% distance of the first circle to the equator
    \newcommand\delt{.93};%% the delta
    \newcommand\factor{.5};% the length of the normal vectors
%%% End of the parameters to play with
    \draw (0,0) circle (1);
    \draw[dashed,gray,very thin] (1,0) arc(0:180:1 and \exc);
    \draw[gray,very thin] (1,0) arc(360:180:1 and \exc);
    \begin{scope}[rotate=15]
    %%% The first circle
    \pgfmathparse{\rr}\let\rayon\pgfmathresult;
    \pgfmathparse{sqrt(1-\rayon*\rayon)}\let\y\pgfmathresult;
    \pgfmathparse{sqrt(1-\delt*\delt)}\let\z\pgfmathresult;
    \pgfmathparse{asin((\delt*\y - sqrt(1-\ccc))/(\rayon*\z))}\let\theta\pgfmathresult;
    \pgfmathparse{cos(\theta)}\let\c\pgfmathresult;
    \pgfmathparse{sin(\theta)}\let\s\pgfmathresult;
    \pgfmathparse{atan(\z*\c/(\delt*\rayon+\y*\z*\s))}\let\alpha\pgfmathresult;
    \begin{scope}[rotate=\alpha,color=red]
      \draw[dashed,->] (0,0)--(0,\delt*\rayon);  %node[left] {$x_1$};
      \draw (0,\delt*\rayon) ellipse ({\z} and {\y*\z});
      \draw (-\z,\delt*\rayon);%  node[below] {\hskip10pt $\partial \Sigma_{x_1}$};
%      \draw[shift={(\z*\c,\delt*\rayon+\y*\z*\s)},scale=\factor,->] (0,0)--(\delt*\c,-{\z}*\rayon+\delt*\y*\s); % The normal vector
      \draw[shift={(\z*\c,\delt*\rayon+\y*\z*\s)},scale=\factor,->] (0,0)--(-\s,\c*\y);
    \end{scope}

%%% The second circle
    \pgfmathparse{\rrr}\let\rayon\pgfmathresult;
    \pgfmathparse{sqrt(1-\rayon*\rayon)}\let\y\pgfmathresult;
    \pgfmathparse{sqrt(1-\delt*\delt)}\let\z\pgfmathresult;
    \pgfmathparse{180-asin((\delt*\y - sqrt(1-\ccc))/(\rayon*\z))}\let\theta\pgfmathresult;
    \pgfmathparse{cos(\theta)}\let\c\pgfmathresult;
    \pgfmathparse{sin(\theta)}\let\s\pgfmathresult;
    \pgfmathparse{atan(\z*\c/(\delt*\rayon+\y*\z*\s))}\let\alpha\pgfmathresult;
    \begin{scope}[rotate=\alpha,color=blue]
      \draw[dashed,->] (0,0)--(0,\delt*\rayon); %node[right] {$x_2$};
      \draw (0,\delt*\rayon) ellipse ({\z} and {\y*\z});
      \draw (\z,\delt*\rayon); %node[right] {$\partial \Sigma_{x_2}$};
%      \draw[shift={(\z*\c,\delt*\rayon+\y*\z*\s)},scale=\factor,->] (0,0) node[above] {$y$}--(\delt*\c,-{\z}*\rayon+\delt*\y*\s); % The normal vector
      \draw[shift={(\z*\c,\delt*\rayon+\y*\z*\s)},scale=\factor,->] (0,0)--(\s,-\c*\y);% The tangent vector
    \end{scope}
%     \draw (0, \pgfmathparse{sqrt(\ccc)}  \pgfmathresult) node {$\bullet$};
    \pgfmathparse{sqrt(\ccc)};  
    \draw (0,\pgfmathresult) node {$\bullet$} node[above] {$y$};
\end{scope}
  \draw (-0.57,0.45) node {{\color{red} $x_1$}};
  \draw (0.28,0.62) node {{\color{blue} $x_2$}};
  \draw (-0.6,0) node {{\color{red} $\partial \Sigma_{x_1}$}};
  \draw (0.7,0.35) node {{\color{blue} $\partial \Sigma_{x_2}$}};
  \draw (2,-0.5) node {$y \in \partial\Sigma_{x_1} \hskip-2pt \cap \partial \Sigma_{x_2}$}; 
  \draw (2,-0.8) node {$T_y \partial \Sigma_{x_1} \neq T_y \partial \Sigma_{x_2}$};  \end{tikzpicture}
\vskip-5pt
\caption{Failure of \eqref{item:SteinCurvature} for spherical Hilbert transforms $H_{{\mathbf{S},\delta}}$} 
\begin{center}
Here $H_{{\mathbf{S},\delta}} = S_\Sigma$ with $\Sigma = \big\{(x,y) \in \mathbf{S}^n \times \mathbf{S}^n : \langle x,y \rangle > \delta \big\}$ for $n = 2$.
\end{center}
\label{picture:sphere}
\end{figure}

Theorem A has profound consequences for Fourier multipliers on Lie group von Neumann algebras. Smooth Fourier multipliers on group algebras were intensively investigated over the last decade \cite{Ca,CGPT2,GJP,JMP1,JMP2,MRX,PRS}. The nonsmooth theory concerns a longstanding search to classify idempotent Fourier $L_p$-multipliers, but the geometric behavior of their symbols is very sensitive to the underlying group. Bo\.zejko and Fendler \cite{BF} studied an analog of Fefferman's ball multiplier theorem in the free group for $|1/p-1/2| > 1/6$, though the general case $p \neq 2$ stands open since then. More recently, Mei and Ricard found a large class of free Hilbert transforms in their remarkable work \cite{MR}. The search for Hilbert transforms on general groups also includes crossed products and groups acting on tree-like structures \cite{GPX,PRo}. 

In this paper, we shall give a complete characterization of the local boundary behavior for completely bounded idempotent Fourier multipliers on arbitrary Lie groups. Our result is more easily stated for simply connected groups. We refer to Section~\ref{sec:fourier} for the statement of the result on general Lie groups (Theorem~\ref{thm:main_Fourier}) and for the precise definitions of local Fourier multipliers.

\begin{TheoB}
\emph{Let $p \in (1,\infty)\setminus\{2\}$. Let $\G$ be a simply connected Lie group, $\A \subset \G$ a $\mathcal{C}^1$-domain and $g_0 \in \partial \Omega$ a point in the boundary of $\A$. The following are equivalent}:
\begin{enumerate}
    \item \emph{$\chi_\A$ defines locally at $g_0$ a completely bounded Fourier $L_p$-multiplier.}

\vskip2pt

    \item \emph{There is a smooth action $\G \to \diff(\R)$ by diffeomorphisms on the real line, such that $\A$ coincides on a neighbourhood of $g_0$ with $\{g \in \G \mid g \cdot 0 > g_0 \cdot 0\}$.}
\end{enumerate}
\end{TheoB}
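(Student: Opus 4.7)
The plan is to reduce Theorem B to Theorem A via the classical Haagerup transference between Fourier multipliers on $\G$ and Toeplitz-type Schur multipliers on $L_2(\G)$. To $\A$ one associates the right-diagonally invariant domain
\[
\tilde\Sigma \;:=\; \big\{(g,h) \in \G \times \G : gh^{-1} \in \A \big\},
\]
so that the local cb-boundedness of the Fourier multiplier $\chi_\A$ at $g_0$ translates into the local $S_p$-boundedness of the Schur multiplier $S_{\tilde\Sigma}$ at $(g_0,e) \in \partial\tilde\Sigma$. The first step is to verify that $\tilde\Sigma$ is automatically transverse at $(g_0,e)$: since $\mu(g,h) = gh^{-1}$ is a submersion whose partial maps are diffeomorphisms of $\G$, the tangent space of $\partial\tilde\Sigma = \mu^{-1}(\partial\A)$ projects surjectively onto both $T_{g_0}\G$ and $T_e\G$. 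Applying Theorem A yields $\mathcal{C}^1$ functions $f_1 : U \to \R$, $f_2 : V \to \R$ on a product neighbourhood $U \times V$ of $(g_0,e)$ for which $\tilde\Sigma \cap (U \times V) = \{(g,h) : f_1(g) > f_2(h)\}$.

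The heart of the argument is to exploit right-invariance $\tilde\Sigma \cdot k = \tilde\Sigma$ to extract a group action from this triangular form. A short uniqueness lemma, proved by recovering the level sets of the $f_i$ from $\tilde\Sigma$ using the nonvanishing-gradient property that transversality confers, shows that any two $\mathcal{C}^1$-triangular representations of a single domain differ by postcomposition of both functions with a common increasing $\mathcal{C}^1$-bijection of $\R$. Applied to the identity
\[
\{f_1(g) > f_2(h)\} \;=\; \{f_1(gk) > f_2(hk)\}
\]
for $k$ in a neighbourhood of $e$, this produces a smooth family of increasing diffeomorphisms $\psi_k$ of an interval $I \subset \R$ with $f_1(gk) = \psi_k(f_1(g))$ and $f_2(hk) = \psi_k(f_2(h))$. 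Associativity forces $\psi_{k_1 k_2} = \psi_{k_2} \circ \psi_{k_1}$, so $\phi_k := \psi_{k^{-1}}$ is a local smooth homomorphism from $\G$ into $\diff(I)$, and differentiating at $e$ yields a Lie algebra morphism $\alpha : \mathfrak{g} \to \mathrm{Vect}(I)$.

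Simple connectedness of $\G$ enters at the globalization step: by Palais's integrability theorem, $\alpha$ integrates to a genuine smooth $\G$-action on a connected 1-manifold, which we identify diffeomorphically with $\R$ after restricting to the maximal $\G$-invariant open subinterval of completeness and reparametrizing. Tracing back, the orbit map $g \mapsto g \cdot 0$ coincides with $f_1$ up to this reparametrization, so the boundary relation $f_1(g_0) = f_2(e)$ at $(g_0,e)\in\partial\tilde\Sigma$ yields $\A = \{g : g \cdot 0 > g_0 \cdot 0\}$ locally, proving (1)$\Rightarrow$(2). The converse (2)$\Rightarrow$(1) is immediate: if $\A$ has the stated form locally, then $\tilde\Sigma$ is cast as $\{(g,h) : (h^{-1}\cdot 0) > (g^{-1}g_0\cdot 0)\}$, matching the triangular form in clause (3) of Theorem A, whence that implication combined with transference returns cb-boundedness of $\chi_\A$. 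The main technical obstacle I foresee is this globalization: the infinitesimal vector fields on $\R$ can genuinely fail to be complete, so passing from a germ of $\G$-action to a global action $\G \to \diff(\R)$ requires careful one-dimensional surgery (restriction to a $\G$-invariant subinterval, still diffeomorphic to $\R$, possibly combined with smooth cutoff modifications), together with bookkeeping for orientation and base point.
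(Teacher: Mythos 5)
Your overall strategy is sound and reaches the right statement, but it diverges from the paper's argument in two substantive ways, one of which you under-weight. First, the Fourier--Schur transference you invoke as ``classical Haagerup transference'' is not classical for $1<p\neq 2<\infty$: for non-amenable groups the global equivalence of cb-$L_p$-Fourier and Herz--Schur multipliers is an open problem, and the \emph{local} version you need is precisely Theorem~\ref{thm:local_transference_general} of the paper, whose proof (Lemma~\ref{lem:local_isomorphic_embedding}, including the non-unimodular case via Terp's Hausdorff--Young theory) occupies most of Section~\ref{sec:fourier}. You should either cite that result explicitly or account for the fact that it must be proved. Second, for the geometric core you route through condition \eqref{item:HilbertTransform} of Theorem~A (the triangular form $f_1(g)>f_2(h)$) and then extract a local action from the invariance of $\widetilde\Sigma$ under the diagonal right translation, via a uniqueness lemma for triangular representations and a cocycle $\psi_{k_1k_2}=\psi_{k_2}\circ\psi_{k_1}$. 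The paper instead works with condition \eqref{item:SteinCurvature} of Theorem~A: translating the zero-curvature condition through the identifications $\widetilde\A_g=g^{-1}\A$, $\widetilde\A^h=\A h^{-1}$ yields $T_x\partial\A=d_eL_x(\mathfrak h)$ and $\Ad_x\mathfrak h=\mathfrak h$ for $x\in\partial\A$ near $e$, whence $\mathfrak h=T_e\partial\A$ is a codimension-$1$ Lie subalgebra and, by Frobenius, $\partial\A$ is locally $\exp(\mathfrak h)$; simple connectedness then enters only through Mostow's theorem, which makes the integral subgroup $\mathrm H$ closed so that $\G/\mathrm H$ is a simply connected $1$-manifold, i.e.\ $\R$. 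The paper's route buys a clean intermediate characterization (condition (4) of Theorem~\ref{thm:main_Fourier}, valid without simple connectedness) and avoids your two delicate points: the regularity bookkeeping in the uniqueness lemma (your $f_i$ are only $\mathcal C^1$, so the vector fields $\alpha(X)$ on $I$ are a priori only continuous, which complicates a direct appeal to Palais), and the globalization surgery you rightly flag, which is subsumed by Mostow plus Lie's classification. Your route, if carried out carefully, is viable and arguably more self-contained on the dynamical side, but as written the transference step and the $\mathcal C^1$-regularity of the infinitesimal action are genuine gaps to fill.
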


Alternatively, this means that $\partial \A$ is locally a coset of a codimension $1$ subgroup. 
There are two ingredients in the proof of Theorem B. The first is Theorem A. The second is a general result relating local complete $L_p$-boundedness of Fourier and Schur multipliers for arbitrary locally compact groups. Such a result is known to be true globally at the endpoints $p=1,\infty$ \cite{BF} or when the group $\G$ is amenable \cite{CS,NR}. The eventuality that it could be true locally is a recent observation \cite{PRS} for even integers $p \in 2 \mathbf{Z}_+$ and unimodular groups $\G$. In Theorem~\ref{thm:local_transference_general} below we manage to prove it in full generality as a crucial step towards Theorem B.

Lie himself classified Lie groups admitting (local) actions by diffeomorphisms on the real line \cite{zbMATH02681669}. This classification into three types (translation, affine, and projective) gives rise to the following three fundamental examples of a group $\G$ with a smooth domain $\Omega$:
\begin{itemize}
\item[i)] The real line $\G_1 = \R$ with $\Omega_1 = (0,\infty)$.

\vskip2pt

\renewcommand{\thefootnote}{$\star$}

\item[ii)] The affine group $\G_2 = \Aff_+(\R)$\footnote{Affine increasing bijections $x\mapsto ax+b$ for $a \in \R_+^*$ and $b \in \R$, isomorphic to $\R \rtimes \R_+^*$.} and $\Omega_2 = \{ax+b: b > 0\}$.

\renewcommand{\thefootnote}{$\dag$}

\item[iii)] The universal covering group $\G_3 = \widetilde{\PSL}_2(\R)$\footnote{The action $\alpha: \widetilde{\PSL}_2(\R) \curvearrowright \R$ is obtained by lifting the standard action of $\PSL_2(\R)$ on the projective line to the universal covers. If $p: \R \to P^1(\R)$ denotes the universal cover, then the universal cover of $\SL_2(\R)$ is identified with the group of homeomorphisms $g: \R \to \R$ for which there is $A \in \PSL_2(\R)$ such that $p\circ g= A \cdot p$.} with $\Omega_3 = \{g: \alpha_g(0) > 0\}$.
\end{itemize}
Lie's classification implies the following interesting consequence.

\begin{CorB} \label{CorB1}
\emph{Conditions} (1) \emph{and} (2) \emph{in Theorem} B \emph{are equivalent to}: 
\begin{itemize}
    \item[(3)] \emph{There is $j\in \{1,2,3\}$ and a smooth surjective homomorphism $f\colon \G \to \G_j$ such that the domain $\A$ coincides on a neighbourhood of $g_0$ with $g_0 f^{-1}(\Omega_j)$.}
\end{itemize}
\emph{Therefore, there are three fundamental models for Hilbert transforms on Lie groups.}
\end{CorB}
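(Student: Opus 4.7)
The direction (3)$\Rightarrow$(2) is immediate: composing the model action $\beta_j\colon \G_j \to \diff(\R)$ with $f$ yields a smooth action $\alpha = \beta_j \circ f$ satisfying, on a neighbourhood of $g_0$, the required equality $g_0 f^{-1}(\Omega_j) = \{g : \alpha_g(0) > \alpha_{g_0}(0)\}$. This follows from the uniform description $\Omega_j = \{h \in \G_j : \beta_j(h)(0) > 0\}$ valid in each of the three models, together with the orientation-preserving character of $\beta_j(f(g_0))$.

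The plan for (2)$\Rightarrow$(3) is to invoke Lie's classification \cite{zbMATH02681669} of finite-dimensional transitive Lie subalgebras of $\mathfrak{X}(\R)$, which asserts that every such subalgebra is, after a local change of coordinates, exactly one of the three model algebras $\mathfrak{g}_j = \mathrm{Lie}(\G_j)$ for $j \in \{1,2,3\}$. Differentiating $\alpha$ at the identity produces a Lie algebra morphism $\rho\colon \mathfrak{g} \to \mathfrak{X}(\R)$. The hypothesis $g_0 \in \partial \A$ forces $\rho$ to be nonzero (otherwise $\A$ would be empty or a full neighbourhood of $g_0$), so its image is a nonzero finite-dimensional Lie subalgebra of $\mathfrak{X}(\R)$, automatically transitive near $0$. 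Lie's theorem furnishes an orientation-preserving local diffeomorphism $\phi$ with $\phi(0) = 0$ and a unique $j \in \{1,2,3\}$ such that $\phi_*\rho(\mathfrak{g}) = \mathfrak{g}_j$. The resulting surjective Lie algebra morphism $\mathfrak{g} \to \mathfrak{g}_j$ integrates, by simple connectedness of $\G$, to a smooth group homomorphism $f\colon \G \to \G_j$; surjectivity at the group level follows from surjectivity at the Lie algebra level and connectedness of $\G_j$.

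The local identity $\A = g_0 f^{-1}(\Omega_j)$ near $g_0$ is then verified as follows. For $g$ near $g_0$, set $g' := g_0^{-1}g$, which is near the identity; by integration of the intertwining at the Lie algebra level, $\phi(\alpha_{g'}(0)) = \beta_j(f(g'))(0)$. Writing $\alpha_g = \alpha_{g_0} \circ \alpha_{g'}$ and using that $\alpha_{g_0}$ is orientation-preserving (since $\G$ is connected, $\alpha(\G)$ lies in the identity component of $\diff(\R)$), one obtains the chain of equivalences
$$\alpha_g(0) > \alpha_{g_0}(0) \iff \alpha_{g'}(0) > 0 \iff \phi(\alpha_{g'}(0)) > 0 \iff \beta_j(f(g'))(0) > 0 \iff g' \in f^{-1}(\Omega_j),$$
which rearranges to $g \in g_0 f^{-1}(\Omega_j)$. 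The main obstacle is, of course, Lie's classification itself, which is the substantive geometric input; once it is accepted as a black box, the remainder amounts to standard bookkeeping with base points, orientations, and the passage between Lie algebra and group morphisms via simple connectedness.
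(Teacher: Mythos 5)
Your proposal is correct in outline and rests on the same essential input as the paper, namely Lie's classification of actions on the real line, but the technical route differs. The paper stays entirely at the group level: since $g_0\in\partial\A$ forces $0$ not to be fixed and $\G$ is connected, the $\G$-orbit of $0$ is an open interval, which one identifies with $\R$ to make the action transitive; the global classification of transitive actions of connected Lie groups on $\R$ (Ghys, \S 4.1) then says the image of $\G$ in $\diff(\R)$ \emph{is} one of $\G_1,\G_2,\G_3$, and $f$ is simply the action map. You instead differentiate to a morphism $\rho\colon\mathfrak g\to\mathfrak X(\R)$, apply the local (Lie-algebra-level) classification, and re-integrate using simple connectedness of $\G$. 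Both are legitimate; your route buys a cleaner citation of Lie's theorem in its classical infinitesimal form at the cost of needing $\G$ simply connected (harmless for Corollary B1, but note the paper's expanded Theorem~\ref{thm:main_Fourier} asserts $(2)\Leftrightarrow(3)$ for merely connected $\G$, where the group-level argument is the one that survives) and of the extra bookkeeping to propagate the intertwining $\phi\circ\alpha_{g'}=\beta_j(f(g'))\circ\phi$ from the Lie algebra back to a neighbourhood of the identity.

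One step is stated incorrectly, though it is easily repaired: a nonzero finite-dimensional subalgebra of $\mathfrak X(\R)$ is \emph{not} automatically transitive near $0$ (take $\rho(\mathfrak g)=\vect(x\partial_x)$, which is nonzero but fixes $0$). What you actually need is that $0$ is not a fixed point of the action, and this is exactly what $g_0\in\partial\A$ gives: if $0$ were fixed by the connected group $\G$, the set $\{g:\alpha_g(0)>\alpha_{g_0}(0)\}$ would be empty, so $\A$ would meet no neighbourhood of $g_0$, contradicting $g_0\in\partial\A$. Non-fixedness of $0$ then forces some $\rho(X)$ to be nonvanishing at $0$ (otherwise the orbit of $0$ under the connected group would be a point), which is the transitivity near $0$ required to apply Lie's classification. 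Your parenthetical already contains this observation; it just needs to be attached to the fixed-point property rather than to $\rho\neq 0$.
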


The above examples define (not only local, but) global completely bounded $L_p$ Fourier multipliers for $1<p<\infty$. Example i) is the classical Hilbert transform and Example iii) follows from recent Cotlar identities for unimodular groups \cite{GPX}. The nonunimodular Example ii) will be properly justified in Example~\ref{ex:cotlar}. 

Theorem B and Corollary B1 give very satisfactory descriptions of completely bounded Fourier idempotents in arbitrary Lie groups. It is certainly surprising that these multipliers are modeled out of exactly three fundamental examples, the classical Hilbert transform and its affine and projective variants. It also shows that every $\mathcal{C}^1$-idempotent is automatically $\mathcal{C}^\infty$. This rigidity property collides head-on with the much more flexible scenario of Theorem A. In the following result, we further describe Fourier $L_p$-idempotents for large classes of Lie groups.

\begin{CorBB}
\emph{Let $p \in (1, \infty) \setminus \{2\}$ and let $\G$ be a Lie group}:
\begin{itemize}
\item[i)] \emph{If $\G$ is simply connected and nilpotent, every cb-$L_p$-bounded smooth Fourier idempotent is locally of the form $H \circ \varphi$, for the Hilbert transform $H$ and some continuous homomorphism $\varphi: \G \to \R$.}

\vskip2pt

\item[ii)] \emph{If $\G$ is a simple Lie group which is not locally isomorphic to $\SL_2(\R)$, then $\G$ does not carry any smooth Fourier idempotent which is locally completely $L_p$-bounded on its group von Neumann algebra.}

\vskip2pt

\item[iii)] \emph{If $\G$ is locally isomorphic to $\SL_2(\R)$, then $\G$ carries a unique local Fourier idempotent which is completely $L_p$-bounded on its group algebra $($up to left/right translations$)$ given by $g \mapsto \frac12 \big( 1 + \mathrm{sgn} \, \mathrm{Tr} (g e_{12}) \big)$.}
\end{itemize}
\end{CorBB}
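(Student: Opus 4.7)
The plan is to reduce the three parts to Corollary~B1 combined with its extension to non-simply-connected Lie groups, formulated as Theorem~\ref{thm:main_Fourier} in Section~\ref{sec:fourier}. By lifting to the universal cover $\widetilde{\G}$, one may assume that any locally cb-$L_p$-bounded smooth Fourier idempotent on $\G$ is, near a boundary point, a translate of $\widetilde{f}^{-1}(\Omega_j)$ for some $j \in \{1,2,3\}$ and some smooth surjective homomorphism $\widetilde{f} \colon \widetilde{\G} \to \G_j$. The three parts now become a structural Lie-theoretic question: for the class of groups at hand, which $\G_j$ can appear as target, and what is the resulting local model.

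For part (i), $\widetilde{\G} = \G$ is nilpotent, so the image of the derivative $d\widetilde{f} \colon \mathfrak{g} \to \mathfrak{g}_j$ is a nilpotent subalgebra. Since $\mathfrak{g}_2 = \mathfrak{aff}(\R)$ is solvable non-nilpotent and $\mathfrak{g}_3 = \mathfrak{sl}_2(\R)$ is simple, surjectivity forces $j=1$ and $\widetilde{f}$ is a continuous character $\varphi \colon \G \to \R$. The symbol $\chi_\A$ is then locally $\chi_{\{\varphi(g) > \varphi(g_0)\}}$, which, after right translating by $g_0$ (an operation preserving cb-$L_p$-boundedness on the unimodular group $\G$), coincides with the pullback of $\chi_{(0,\infty)}$ by $\varphi$, that is, with $H \circ \varphi$.

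For parts (ii) and (iii), $\mathfrak{g}$ is simple noncommutative, so $d\widetilde{f}$ is either zero or injective; surjectivity excludes the former. Since $\dim \mathfrak{g} \geq 3$ while $\dim \mathfrak{g}_j \in \{1,2,3\}$, only $j=3$ is possible, and the embedding $\mathfrak{g} \hookrightarrow \mathfrak{sl}_2(\R)$ is forced to be an isomorphism. Part~(ii) follows immediately from its hypothesis. For part~(iii), integrating $d\widetilde{f}$ gives a Lie group isomorphism $\widetilde{\G} \to \widetilde{\PSL}_2(\R)$, unique up to an automorphism of $\widetilde{\PSL}_2(\R)$. Inner automorphisms are absorbed by the left/right translations already allowed in the statement, while the outer automorphism of $\mathfrak{sl}_2(\R)$ (represented by $X \mapsto -X^T$) either preserves $\Omega_3$ up to translation or replaces it by its complement, whose indicator differs by the constant $1$. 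A direct matrix computation then identifies the condition $\alpha_g(0) > 0$ near the identity with $\Tr(g e_{12}) > 0$, giving the closed-form formula.

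The main obstacle is the lifting step and the preservation of cb-$L_p$-boundedness under passage between $\G$ and its universal cover and under left/right translates of a symbol on a possibly non-unimodular group; once Theorem~\ref{thm:main_Fourier} is available, everything reduces to the elementary Lie-theoretic dichotomy above, and only the explicit identification in (iii) requires a short $2 \times 2$ matrix computation.
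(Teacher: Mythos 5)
Your proof is correct and follows essentially the same route as the paper: reduce to Theorem~\ref{thm:main_Fourier}, then observe that quotients of nilpotent Lie algebras are nilpotent (for (i)) and that among the three model algebras only $\mathfrak{sl}_2(\R)$ can arise from a simple Lie algebra (for (ii) and (iii)) --- the paper argues directly via condition (4) and Lie's classification of codimension-one subalgebras of simple Lie algebras, which makes the detour through the universal cover unnecessary, but the content is the same. One minor slip: $X \mapsto -X^{\mathrm{t}}$ is an \emph{inner} automorphism of $\mathfrak{sl}_2(\R)$ (conjugation by the Weyl element $\left(\begin{smallmatrix}0&-1\\1&0\end{smallmatrix}\right)$), the genuine outer automorphism being conjugation by $\mathrm{diag}(1,-1)$; this does not affect your uniqueness conclusion in (iii), since every automorphism permutes the Borel subalgebras, which form a single conjugacy class.
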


As an illustration for stratified Lie groups, the homomorphism $\varphi$ corresponds on the Lie algebra level with the projection onto any 1-dimensional subspace of the first stratum. The second statement above spotlights the singular nature of harmonic analysis over simple Lie groups. It also yields an alternative way to answer our motivating question: the spherical Hilbert transform $H_\mathbf{S}$ is not $L_p$-bounded for any $p \neq 2$. Finally, as we shall justify, the third statement gives a straightforward solution (in the negative) to Problem A in \cite{GPX}. We refer to \cite{PisAst,P2} for the operator space background necessary for this paper.

The plan of the paper is as follows. Section~\ref{sec:schur} is devoted to idempotent Schur multipliers. It contains the proof of Theorem~A and several discussions, including our analysis of the spherical Hilbert transform. Section~\ref{sec:fourier} is devoted to Fourier multipliers. It contains the proof of Theorem~B and its corollaries. The proof relies on a result of independent interest on the local transference between  Fourier and Schur multipliers for arbitrary locally compact groups,  Theorem~\ref{thm:local_transference_general}.

\section{\bf \large Idempotent Schur multipliers}\label{sec:schur}

In this section we give a complete proof of Theorem A. We begin by recalling some particularly flexible changes of variables for Schur symbols, which preserve the $S_p$-norm of the corresponding Schur multipliers on nonatomic spaces. Then, we prove the  implications  \eqref{item:Spbounded} $\Rightarrow$ \eqref{item:SteinCurvature} $\Rightarrow$ \eqref{item:HilbertTransform} $\Rightarrow$ \eqref{item:Spbounded} in Theorem A separately. We shall finish with some comments and applications to spherical Hilbert transforms. 

\subsection{Schur multipliers}

Let $(X,\mu)$ and $(Y,\nu)$ be $\sigma$-finite measure spaces. Given $1\leq p<\infty$, let $S_p(L_2(X),L_2(Y))$ be the space all of bounded linear operators $T: L_2(X) \to L_2(Y)$ with $\Tr |T|^p <\infty$, which is a Banach space for the norm below \[\|T\|_{S_p}= \big( \Tr \hskip1pt |T|^p \big)^{\frac 1 p}.\] When $p=2$, the Schatten class $S_2(L_2(X),L_2(Y))$ is the space of Hilbert-Schmidt operators $L_2(X) \to L_2(Y)$. It coincides with $L_2(X\times Y)$, regarding any $L_2$-function $(x,y) \mapsto K(x,y)$ as the kernel of the corresponding Hilbert-Schmidt operator 
\[T_Kf(y) = \int_X K(x,y) f(x) d\mu(x).\] 
Given $m \in L_\infty(X\times Y)$, the Schur $S_p$-multiplier with symbol $m$ is defined (when it exists) as the unique bounded linear map $S_m$ on $S_p(L_2(X),L_2(Y))$ which assigns $T_K = ( K(x,y) )_{x\in X, y \in Y} \in S_2 \cap S_p$ to $( m(x,y) K(x,y) )_{x \in X, y \in Y} = S_m (T_K)$. We shall write $\norm{m}{MS_p}$ for its norm, with the convention  $\norm{m}{MS_p}=\infty$ if $S_m$ does not exist. 

The following general fact will be crucial in our proof of Theorem A. It evidences a much greater flexibility of Schur multipliers compared to Fourier multipliers. The proof follows from \cite{LdlS}, we include the argument below. 

\begin{lem}\label{lem:transfere} 
Let $(X,\mu),(X',\mu'),(Y,\nu),(Y',\nu')$ be atomless $\sigma$-finite measure spaces and $f\colon X \to X'$ and $g\colon Y \to Y'$ be measurable maps. Assume the pushforward measures $f_* \mu$ and $g_* \nu$ are absolutely continuous with respect to the measures $\mu'$ and $\nu'$ respectively. Then, for every $m \in L_\infty(X' \times Y')$
\begin{align*}
\norm{m \circ (f \times g)}{MS_p(L_2(X,\mu),L_2(Y,\nu))} & =  \norm{m}{MS_p(L_2(X',f_*\mu),L_2(Y',g_*\nu))} \\ & \leq \norm{m}{MS_p(L_2(X',\mu'),L_2(Y',\nu'))}.
\end{align*}
\end{lem}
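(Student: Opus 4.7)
The plan is to prove the equality and the remaining inequality separately. For the direction $\geq$ of the equality, I would introduce the pull-back isometries $U_f\colon L_2(X',f_*\mu)\to L_2(X,\mu)$, $U_f h=h\circ f$, and analogously $U_g$ (isometric by the defining property of pushforward). A direct calculation, using the disintegration to identify $U_f^*$ with the fiberwise conditional expectation, yields the factorization $T_{K\circ(f\times g)}=U_g\,T_K\,U_f^*$ and the intertwining identity $S_{m\circ(f\times g)}(T_{K\circ(f\times g)})=U_g\,S_m(T_K)\,U_f^*$ for every kernel $K\in L_2(X'\times Y',f_*\mu\otimes g_*\nu)$. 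Since conjugation by isometries preserves the Schatten $p$-norm, this yields $\norm{m\circ(f\times g)}{MS_p(L_2(X,\mu),L_2(Y,\nu))}\geq \norm{m}{MS_p(L_2(X',f_*\mu),L_2(Y',g_*\nu))}$.

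For the reverse direction of the equality, the plan is to exploit the atomlessness of $\mu$ and $\nu$ to produce measure-theoretic product decompositions
\[
(X,\mu)\cong (X',f_*\mu)\times (Z,\lambda),\qquad (Y,\nu)\cong (Y',g_*\nu)\times (W,\eta),
\]
with $\lambda,\eta$ atomless probability measures and $f$, $g$ corresponding to the first projections. Under the induced tensor decomposition $L_2(X,\mu)\cong L_2(X',f_*\mu)\otimes L_2(Z,\lambda)$ and its analog for $Y$, the symbol $m\circ(f\times g)$ is constant in the fiber variables, so that $S_{m\circ(f\times g)}$ identifies with the amplification of $S_m$ by the identity on $S_p(L_2(Z),L_2(W))$. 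A classical fact (Schur multipliers are completely bounded on $S_p$ with cb-norm equal to their norm) then ensures that such amplifications do not increase the multiplier norm, completing the equality.

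The remaining inequality is obtained by a change-of-density argument. Setting $\phi=df_*\mu/d\mu'$, $\psi=dg_*\nu/d\nu'$, $A=\{\phi>0\}$, $B=\{\psi>0\}$, the multiplication maps $J_X h=h\sqrt{\phi}$ and $J_Y h=h\sqrt{\psi}$ are isometric isomorphisms $L_2(X',f_*\mu)\to L_2(A,\mu'|_A)$ and $L_2(Y',g_*\nu)\to L_2(B,\nu'|_B)$. Tracking the effect on kernels shows that, under the induced Schatten isomorphism $T\mapsto J_Y T J_X^{-1}$, the Schur multiplier $S_m$ on the pushforward side is intertwined with the Schur multiplier of the same symbol on the corner $S_p(L_2(A,\mu'|_A),L_2(B,\nu'|_B))$ inside $S_p(L_2(X',\mu'),L_2(Y',\nu'))$; since this corner is $1$-complemented via the contractive projection onto it, the bound follows.

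The main technical obstacle is the product factorization used in the reverse direction of the equality: promoting the fiber disintegration $\mu=\int \mu_{x'}\,d(f_*\mu)(x')$ into a genuine measurable product requires a measurable choice of isomorphism of the atomless fibers $(f^{-1}(x'),\mu_{x'})$ to a common atomless probability model, and this is exactly where the atomlessness of the measure spaces plays its essential role.
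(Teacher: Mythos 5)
Your first step (the lower bound via the isometries $U_f,U_g$ and the intertwining $T_{K\circ(f\times g)}=U_g\,T_K\,U_f^*$, with $U_f^*$ the conditional expectation) is correct, and so is your last step: the change-of-density argument for the final inequality is exactly the content of the reference the paper quotes for it ([LdlS, Lemma 1.9]). The genuine gap is in your second step, the reverse inequality of the equality. The product decomposition $(X,\mu)\cong(X',f_*\mu)\times(Z,\lambda)$ with $f$ the first projection and $\lambda$ atomless does not exist in general, and atomlessness of $\mu$ does not rescue it: the fibres of $f$ need not be atomless (if $f$ is injective the fibres are Dirac masses), and they need not be mutually isomorphic. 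For instance, take $X=[0,\tfrac12]\sqcup([\tfrac12,1]\times[0,1])$ with Lebesgue measure, $X'=[0,1]$, $f$ the identity on the first piece and the first coordinate on the second; then $\mu$ and $f_*\mu$ are atomless, but half the fibres are single atoms and half are copies of $[0,1]$, so no common fibre model $(Z,\lambda)$ exists. Worse, the lemma is stated for arbitrary atomless $\sigma$-finite measure spaces, where the disintegration $\mu=\int\mu_{x'}\,d(f_*\mu)(x')$ and a measurable choice of fibre isomorphisms are simply not available; these require standard Borel structure.

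The paper's proof bypasses any product structure. It identifies $L_2(X',f_*\mu)$ with $L_2(X,\mathcal A,\mu)$ for the sub-$\sigma$-algebra $\mathcal A=f^{-1}(\mathcal B')$, then invokes [LdlS, Lemma 1.13]: a symbol measurable with respect to a sub-$\sigma$-algebra has the same \emph{completely bounded} multiplier norm on $S_p(L_2(\mathcal A,\mu))$ and on $S_p(L_2(\mathcal B,\mu))$, regardless of how the inclusion $L_2(\mathcal A)\subset L_2(\mathcal B)$ sits. It concludes with [LdlS, Theorem 1.18] (cb-norm equals norm on atomless spaces), which is the same ``classical fact'' you appeal to. So the ingredient you are missing is precisely this sub-$\sigma$-algebra invariance of the cb-norm; your amplification argument only covers the special case where the inclusion is of product type. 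Salvaging your route would require first fattening $(X,\mu)$ to $(X\times[0,1],\mu\otimes\mathrm{Leb})$ and restricting to standard Borel spaces, which proves a strictly weaker statement than the lemma.
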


\begin{proof} The last inequality follows directly from \cite[Lemma 1.9]{LdlS} and the absolute continuity assumption. To prove the first identity and lighten the notation, let us assume for simplicity that $(X,\mu)=(Y,\nu)$, $(X',\mu')=(Y',\nu')$ and $f=g$. Let $\mathcal B, \mathcal B'$ be the underlying $\sigma$-algebras, and consider $\mathcal A:=f^{-1}(\mathcal B')$. Then $f$ allows to identify $L_2(X',\mathcal B',f_*\mu)$ with $L_2(X,\mathcal A,\mu)$. In particular we have
\[ \norm{m\circ(f \times f)}{MS_p(L_2(X,\mathcal A,\mu))} = \norm{m}{MS_p(L_2(X',\mathcal B',f_*\mu))},\]
and similarly for the cb-norm. On the other hand, \cite[Lemma 1.13]{LdlS} implies that the cb norms of $m \circ (f\times f)$ on $S_p(L_2(\mathcal A,\mu))$ and $S_p(L_2(\mathcal B,\mu))$ coincide, so we deduce
\[ \norm{m\circ(f \times f)}{\mathrm{cb}MS_p(L_2(X,\mathcal B,\mu))} = \norm{m}{\mathrm{cb}MS_p(L_2(X',\mathcal B',f_*\mu))}.\]
Then \cite[Theorem 1.18]{LdlS} allows us to conclude. Indeed, our assumptions that $\mu$ and $\mu'$ have no atoms imply that both cb-norms are equal to their norms. 
\end{proof}   

\subsection{Proof of Theorem A: Boundedness implies zero-curvature} 
In this paragraph we prove \eqref{item:Spbounded} $\Rightarrow$ \eqref{item:SteinCurvature} from the statement of Theorem A. The key new idea we introduce is an amplification of Meyer's lemma \cite[Lemma 1]{Fe}, which directly connects Schur idempotents with Meyer's classical condition, to which Fefferman's construction may be applied. Taking charts, we can and will assume that $M$ and $N$ are open subsets of $\R^{m}$ and $\R^n$ respectively. We shall further assume for simplicity that $m=n$, our argument applies as well when $m \neq n$. Given $z \in \partial \Sigma$, let $\mathbf{n}(z) = (\mathbf{n}_1(z),\mathbf{n}_2(z)) \in \R^n \oplus \R^n$ be a normal to $\partial \Sigma$ at $z$ pointing towards $\Sigma$. 

\begin{lem}\label{prop:Meyer_Lemma_NC}
Consider a $\mathcal{C}^1$-domain $\Sigma \subset \R^n \times \R^n$ and open sets $U,V \subset \R^n$ such that $\partial \Sigma$ intersects $U \times V$. Let $S_\Sigma$ be the idempotent Schur multiplier whose symbol is the characteristic function of $\Sigma$ and assume that it is bounded on $S_p(L_2(U),L_2(V))$ with norm $C$. Let $x_1, x_2, \ldots, x_N \in U$ and $y \in V$ such that $z_j = (x_j,y)$ is a transverse point in the boundary $\partial \Sigma$ for every $j=1,2,\ldots,N$. Define $u_j = \mathbf{n}_2(z_j)$ and consider functions $f_1, f_2, \ldots, f_N \in L_p(\R^n)$. Then, we have 
\[ \Big\| \Big( \sum_{j=1}^N \big| H_{u_j}(f_j) \big|^2 \Big)^{\frac{1}{2}} \Big\|_{L_p(\R^n)} \leq C \Big\| \Big( \sum_{j=1}^N |f_j|^2 \Big)^{\frac{1}{2}} \Big\|_{L_p(\R^n)}. \]
Here we write $H_u$ for the $u$-directional Hilbert transform $\widehat{H_u f}(\xi) = \chi_{\langle \xi,u\rangle >0} \widehat{f}(\xi)$.
\end{lem}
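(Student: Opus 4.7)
The strategy is a noncommutative analogue of Meyer's classical modulation argument from Fefferman's proof of the ball multiplier theorem \cite{Fe}: the Fourier modulation-to-shift trick is replaced by dilations and linear changes of variables, afforded by the transference Lemma~\ref{lem:transfere}, together with the complete boundedness of Schur multipliers.

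For each transverse point $z_j=(x_j,y)$ I would first localize $\Sigma$ near $z_j$ by a dilation at scale $\epsilon \to 0$. By the $\mathcal{C}^1$ regularity of $\partial\Sigma$ and transversality, the rescaled domains converge pointwise (and on compact sets) to the tangent half-space $H_j=\{(a,b)\in\R^n\times\R^n : \mathbf{n}_1(z_j)\cdot a + u_j \cdot b>0\}$. Lemma~\ref{lem:transfere}, applied to the dilations (measure-class preserving diffeomorphisms), preserves the Schur multiplier norm at each scale, so that in the limit $\chi_{H_j}$ defines a Schur multiplier on any compact region with norm at most $C$. A further linear change of variables (again by Lemma~\ref{lem:transfere}) normalizes $\mathbf{n}_1(z_j)$ and $u_j$ along $\pm e_1$, reducing $\chi_{H_j}$ to the Toeplitz symbol $\chi_{\{a_1-b_1>0\}}$. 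The classical Bo\.zejko--Fendler equivalence between Toeplitz-Schur multipliers and Fourier multipliers on $\R$ identifies the corresponding Schur multiplier (tensored with the identity on the transverse coordinates) with the half-line Fourier projector $\tfrac12(\mathrm{id}+H)$ on $L_p(\R)$; unwinding the change of variables then realizes this projector as $\tfrac12(\mathrm{id}+H_{u_j})$ acting in the second variable of the kernel.

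To combine these pointwise Hilbert-transform bounds into the simultaneous square-function inequality, I would exploit the fact that every Schur multiplier bounded on $S_p$ is automatically completely bounded with the same norm. Tensoring $S_\Sigma$ with the identity on $M_N$, I would apply $S_\Sigma\otimes\mathrm{id}_{M_N}$ to a matrix-valued test kernel $K(x,y')=\sum_j a_j(x) f_j(y') \otimes e_{j,1}$, where $a_j$ is a narrow bump concentrated near $x_j$ (with disjoint supports across $j$) and $e_{j,1}$ is a column matrix unit. Combined with the local linearization of Step~1, the kernel $(S_\Sigma\otimes\mathrm{id})K$ becomes asymptotically $\sum_j a_j(x)(H_{u_j}f_j)(y') \otimes e_{j,1}$ in the concentration limit (up to an absorbed affine constant), while the column matrix-unit structure is designed to capture the scalar-valued column $L_p$-norm $\|(\sum|f_j|^2)^{1/2}\|_p$ as the operator-space norm of $K$, and analogously for $(S_\Sigma\otimes\mathrm{id})K$. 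The main technical obstacle lies precisely in this last identification: faithfully realizing the commutative square-function norm as the Schatten-type norm of a concrete test kernel in $S_p(L_2(U)\otimes\ell^2_N,L_2(V)\otimes\ell^2_N)$, while controlling the error terms uniformly as the bumps $a_j$ concentrate and the local linearization becomes exact. This is the genuine noncommutative counterpart of Meyer's combination of modulation with Khintchine and Littlewood--Paley arguments, and it is precisely where the $cb$-nature of $S_\Sigma$, rather than only its $S_p$-boundedness, is essential.
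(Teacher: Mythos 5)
Your first step --- blowing up $\Sigma$ at each transverse point via Lemma~\ref{lem:transfere} and recognizing the limiting half-space symbol as the Toeplitz symbol of a directional Hilbert transform --- is essentially the paper's claims (A) and (B). The genuine gap is in your second step, where the $N$ directions are to be combined into the square-function inequality. A Schur multiplier acts by pointwise multiplication of the kernel, so applying $S_\Sigma$ to the rank-one kernel $a_j(x)f_j(y')$ produces the kernel $\chi_\Sigma(x,y')a_j(x)f_j(y')$; in the concentration limit this is $a_j(x)\,\chi_{\Sigma_{x_j}}(y')f_j(y')$, i.e.\ $f_j$ multiplied by the indicator of the section $\Sigma_{x_j}$ --- a multiplication operator, not $H_{u_j}f_j$. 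No directional Hilbert transform can arise from $S_\Sigma$ acting on rank-one test kernels. Moreover, for $p\neq 2$ the $S_p$-norm of a finite-rank column of bump kernels does not compute $\|(\sum_j|f_j|^2)^{1/2}\|_{L_p(\R^n)}$, so the identification you flag as ``the main technical obstacle'' is not merely technical: such test operators cannot see commutative $L_p$-norms at all.

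What is needed instead (and what the paper does) is twofold. First, the blow-up must be performed \emph{simultaneously} for all $N$ points: apply Lemma~\ref{lem:transfere} to the single map $(\xi,j)\mapsto x_j+\varepsilon T_j\xi$ from $\R^n\times\{1,\dots,N\}$ into $U$ and the \emph{common} map $\eta\mapsto y+\varepsilon\eta$ into $V$, where $T_j^*\mathbf{n}_1(z_j)=-u_j$. Letting $\varepsilon\to 0^+$ yields one Schur multiplier on $S_p(L_2(\R^n),L_2(\R^n\times\{1,\dots,N\}))$ whose $j$-th block has symbol $\frac12(1+\mathrm{sgn}\langle\eta-\xi,u_j\rangle)$ and whose norm is $\leq C$; this simultaneity is exactly what makes the constant independent of $N$. (Note the linear normalization is applied only in the first variable; normalizing $u_j$ to $e_1$ in the shared second variable, as you propose, would require different changes of variables in $\eta$ for different $j$ and would destroy the simultaneity.) Second, the passage from this Schur multiplier to the vector-valued Fourier inequality goes through the completely isometric embedding $j_p\colon L_p(\R^n)\to\prod_\cU S_p(L_2(\R^n))$ of \cite[Theorem 5.2]{CS}, which intertwines $H_u$ with the Toeplitz Schur multiplier $m_u(\xi,\eta)=\frac12(1+\mathrm{sgn}\langle\eta-\xi,u\rangle)$; the square function is then encoded as the column $\sum_j j_p(f_j)\otimes e_{j,1}$ and the block Schur multiplier above is applied to it. Your instinct to use Fourier--Schur transference is right in your first step; the gap is that you abandon it in the second step in favor of test kernels on which $S_\Sigma$ simply does not act as a Hilbert transform.
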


The implication \eqref{item:Spbounded}$\Rightarrow$\eqref{item:SteinCurvature} in Theorem A follows from Lemma \ref{prop:Meyer_Lemma_NC}. Indeed, by the transversality assumption, the map $z \mapsto \mathbf{n}_2(z)/\|\mathbf{n}_2(z)\|$ is continuous on a neighbourhood of the transverse point $(x_0,y_0)$ in Theorem A. Moreover, for $y$ close to $y_0$ we have that $\partial \Sigma^y$ is locally a manifold, so is connected. Therefore, if \eqref{item:SteinCurvature} was not true, there would exist $y$ close to $y_0$ such that the subset of the sphere $X = \{\mathbf{n}_2(x',y)/\|\mathbf{n}_2(x',y)\|: x' \in \partial \Sigma^y \cap U\}$ contains a connected subset not reduced to a point. According to \eqref{item:Spbounded} and Lemma \ref{prop:Meyer_Lemma_NC}, this would imply that the square function inequality there holds uniformly in $L_p(\R^n)$ for any finite set in a continuum of directions in the $(n-1)$-sphere. However, Fefferman's main result in his proof of the ball multiplier theorem \cite{Fe} claims that such a uniform inequality cannot hold. In fact, Fefferman stated it for $n=2$ but the result in arbitrary dimension follows from the $2$-dimensional case by K. de Leeuw's restriction theorem \cite{dL}. In particular, the zero-curvature condition \eqref{item:SteinCurvature} must hold. 

\begin{proof}[Proof of Lemma \emph{\ref{prop:Meyer_Lemma_NC}}.] The proof relies on the following two claims:
\begin{itemize}
\item[(A)] Let $(x,y)$ be a transverse point in the boundary of $\partial \Sigma$ and let $T \in \GL_n(\R)$ be such that $T^* \mathbf{n}_1(x,y) = - \mathbf{n}_2(x,y)$. Then, the following identity holds for almost every $\xi,\eta \in \R^n$ 
\[\hskip20pt \lim_{\varepsilon\to 0^+} \chi_\Sigma \big( x+\varepsilon T\xi, y+\varepsilon \eta \big) = \frac12 \big( 1 + \mathrm{sgn} \big\langle \mathbf{n}_2(x,y) , \eta - \xi \big\rangle \big).\]

\item[(B)] Let $u_j$ be as in the statement. Then the Schur multiplier  
\[\hskip17pt \big( (\xi,j),\eta \big) \in \big( \R^n \times\{1,\dots,N\} \big) \times \R^n \mapsto \frac12 \big( 1+ \mathrm{sgn} \langle \eta - \xi,u_j \rangle \big)\] is bounded on $S_p(L_2(\R^n),L_2(\R^n \times \{1,2,\ldots,N\}))$ with norm $\le \norm{\chi_\Sigma}{MS_p}$.
\end{itemize}
Assuming the validity of the above claims, we may now conclude the proof using standard transference ideas that go back at least to the work of Bo{\.z}ejko and Fendler \cite{BF0}. We know from \cite[Theorem 5.2]{CS} that there is an ultrafilter $\cU$ on $\N$ and a completely isometric map \[j_p\colon L_p(\R^n) \to \prod_\cU S_p(L_2(\R^n))\] that intertwines Fourier and Schur multipliers. This gives $j_p(H_u(f)) = S_{m_u}(j_p(f))$ for every $u \in \R^n$ and $f \in L_p(\R^n)$. Here $m_u(\xi,\eta) = \frac12 (1 + \mathrm{sgn} \langle \eta-\xi,u \rangle )$. As a consequence we have for every $f_1,f_2,\ldots,f_N \in L_p(\R^n)$
\begin{eqnarray*} 
\Big\| \Big( \sum_{j=1}^N |H_{u_j}(f_j)|^2 \Big)^{\frac{1}{2}} \Big\|_{L_p(\R^n)} \!\!\! & = & \!\!\! \Big\| \sum_{j=1}^N H_{u_j}(f_j) \otimes e_{j,1} \Big\|_{L_p(\R^n;S_p)} \\ \!\!\! & = & \!\!\! \Big\| \sum_{j=1}^N j_p(H_{u_j}(f_j)) \otimes e_{j,1} \Big\|_{S_p} \\ \!\!\! & = & \!\!\! \Big\| \sum_{j=1}^N S_{m_{u_j}} (j_p(f_j)) \otimes e_{j,1} \Big\|_{S_p}.
\end{eqnarray*}
According to claim (B) we deduce 
\begin{eqnarray*}
\Big\| \Big( \sum_{j=1}^N |H_{u_j}(f_j)|^2 \Big)^{\frac{1}{2}} \Big\|_{L_p(\R^n)} \!\!\! & \leq & \!\!\! \|\chi_\Sigma\|_{MS_p} \Big\| \sum_{j=1}^N j_p(f_j) \otimes e_{j,1} \Big\|_{S_p} \\ \!\!\! & = & \!\!\! \|\chi_\Sigma\|_{MS_p} \Big\| \Big( \sum_{j=1}^N |f_j|^2 \Big)^{\frac{1}{2}} \Big\|_{L_p(\R^n)}.
\end{eqnarray*}

Thus, the assertion is a consequence of claim (B), for which we need to justify claim (A) first. To do so, we can assume that $\Sigma = f^{-1}(0,\infty)$ for a $\mathcal{C}^1$-submersion $f \colon U \times V \to \R$. Then $\nabla f(x,y)= (\nabla_x f(x,y),\nabla_y f(x,y))$ is a normal vector to the boundary $\partial \Sigma = f^{-1}(0)$ at every $(x,y) \in \partial \Sigma$. Thus, replacing $f$ by a multiple, we can assume that its gradient is $(\mathbf{n}_1(x,y),\mathbf{n}_2(x,y))$. Then, the Taylor expansion of $f$ gives
\begin{eqnarray*} 
f \big( x+\varepsilon T \xi, y+\varepsilon \eta \big) & = & \varepsilon \big\langle \mathbf{n}_1(x,y), T\xi \big\rangle + \varepsilon \big\langle \mathbf{n}_2(x,y), \eta \big\rangle + o(\varepsilon) \\ & = & \varepsilon \big\langle \mathbf{n}_2(x,y), \eta - \xi \big\rangle + o(\varepsilon).
\end{eqnarray*}
Therefore, if $\eta - \xi$ is not orthogonal to $\mathbf{n}_2(x,y)$ (a condition that holds for almost every $\xi$ and $\eta$), we have $\chi_\Sigma (x+\varepsilon T\xi, y+\varepsilon \eta) = \frac12 (1 + \mathrm{sgn} \langle \mathbf{n}_2(x,y) , \eta - \xi \rangle)$ for every $\varepsilon>0$ small enough. This proves claim (A).
 
To prove claim (B) we apply (A). More precisely, let $T_j \in \GL_n(\R)$ be such that $T_j^* \mathbf{n}_1(x_j,y) = - \mathbf{n}_2(x_j,y) = - u_j$ for every $j = 1,2,\ldots,N$. The existence of these maps is clear, because by the transversality assumption both $\mathbf{n}_1(x_j,y)$ and $\mathbf{n}_2(x_j,y)$ are nonzero vectors in $\R^n$ and $\GL_n(\R)$ acts transitively on them. By  Lemma~\ref{lem:transfere} the Schur multiplier with symbol 
\[ m_\varepsilon \big( (\xi,j),\eta \big) = \chi_\Sigma \big( x_j + \varepsilon T_j \xi, y + \varepsilon \eta \big) \]
is bounded with norm $\leq \norm{\chi_\Sigma}{MS_p}$ for every $\varepsilon>0$. Taking $\varepsilon \to
0^+$, we obtain that the almost everywhere limit of $m_\varepsilon$ is $S_p$-bounded with norm $\leq\norm{\chi_\Sigma}{MS_p}$. 
However this limit is $( 1+ \mathrm{sgn} \langle \eta - \xi,u_j \rangle)/2$ from claim (A). This proves claim (B). \end{proof}

\begin{rem}
\emph{Taking \[\Sigma = \big\{ (x,y): x-y \in \Omega \big\}\] for some smooth domain $\Omega$, Lemma \ref{prop:Meyer_Lemma_NC} reduces to the classical Meyer's lemma.}
\end{rem}

\subsection{Proof of Theorem A: Zero-curvature implies triangular truncations}

The implication \eqref{item:SteinCurvature}$\Rightarrow$\eqref{item:HilbertTransform} is a general geometric statement concerning transverse hypersurfaces in manifolds of product type. Let $M,N$ be manifolds of dimension $m,n$. Following the terminology in the Introduction, we say that a $\mathcal{C}^1$-submanifold $\manifold \subset M \times N$ of codimension $1$ is said to be transverse at $z=(x,y) \in \manifold$ if the tangent space of $\manifold$ at $z$ maps surjectively on each factor $T_x M$ and $T_y N$. In that case, $\manifold_x = \{y' \in N \mid (x,y') \in \manifold\}$ and $\manifold^y = \{x' \in M \mid (x',y) \in \manifold\}$ are manifolds on a neighbourhood $y$ and $x$ respectively.
\begin{thm}\label{thm:hypersurfaces_in_products}
Let $\manifold \subset M \times N$ be a $\mathcal{C}^1$-submanifold of codimension $1$ that is transverse at $z_0=(x_0,y_0)\in \manifold$. Then, the following are equivalent$\hskip1pt :$
  \begin{enumerate}
      \item\label{item:SteinCurvature1} There are neighbourhoods $U,V$ of $x_0$ and $y_0$ in $M,N$ such that for every $x,x' \in U$ and $y \in V$ with $(x,y),(x',y) \in \manifold$, $T_y \manifold_x = T_y \manifold_{x'}$.
      \item\label{item:SteinCurvature2} There are neighbourhoods $U,V$ of $x_0$ and $y_0$ in $M,N$ such that for every $x \in U$ and $y,y' \in V$ with $(x,y),(x,y') \in \manifold$, $T_x \manifold^y = T_x \manifold^{y'}$.
      \item\label{item:HilbertTransform1} There are neighbourhoods $U,V$ of $x_0$ and $y_0$ in $M,N$ and $C^1$ submersions $f \colon U \to \R$ and $g \colon V \to \R$ with $\manifold \cap (U \times V) = \{(x,y) \in U \times V \mid f(x)=g(y)\}$.
  \end{enumerate}
\end{thm}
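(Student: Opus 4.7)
For (\ref{item:HilbertTransform1}) $\Rightarrow$ (\ref{item:SteinCurvature1}) \& (\ref{item:SteinCurvature2}): if $\manifold \cap (U \times V) = \{(x,y) : f(x) = g(y)\}$, then $\manifold_x = g^{-1}(f(x))$ is a level set of the submersion $g$, so $T_y\manifold_x = \ker dg_y$ depends only on $y$. The argument for (\ref{item:SteinCurvature2}) is identical after swapping $(M,N)$ and $(f,g)$.

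\textbf{Reduction and local setup.} By the symmetry $(M,N) \leftrightarrow (N,M)$, conditions (\ref{item:SteinCurvature1}) and (\ref{item:SteinCurvature2}) are interchanged, so it suffices to prove (\ref{item:SteinCurvature1}) $\Rightarrow$ (\ref{item:HilbertTransform1}). Pass to local coordinates so that $M,N$ are open subsets of $\R^m, \R^n$ and $z_0 = 0$. Transversality together with the implicit function theorem yields a local representation $\manifold = \{y_n = \varphi(x, \tilde y)\}$, where $\tilde y = (y_1, \ldots, y_{n-1})$, $\varphi$ is of class $\mathcal{C}^1$, and $\nabla_x \varphi(0) \neq 0$ (the $N$-side transversality). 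Under this parametrization $T_y \manifold_x$ is the kernel of the $1$-form $dy_n - \nabla_{\tilde y}\varphi(x, \tilde y) \cdot d\tilde y$, so (\ref{item:SteinCurvature1}) translates into the existence of a continuous function $\Phi$ on a neighbourhood of the origin with
\[
\nabla_{\tilde y}\varphi(x, \tilde y) \,=\, \Phi\bigl(\tilde y, \varphi(x, \tilde y)\bigr).
\]

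\textbf{Construction of $f$ and $g$.} Define $f(x) := \varphi(x, 0)$; by the $M$-side transversality, $f$ is $\mathcal{C}^1$ with nonvanishing gradient at $0$, hence a submersion. Next, define $g: V \to \R$ by $g(\tilde y, y_n) := f(x)$ for any $x \in U$ with $\varphi(x, \tilde y) = y_n$, the existence of such $x$ being ensured by $\nabla_x\varphi \neq 0$. The identity $\manifold \cap (U \times V) = \{(x,y) : f(x) = g(y)\}$ is then immediate from the definitions, and once well-definedness of $g$ is established, $g$ automatically inherits the $\mathcal{C}^1$ regularity (via any local $\mathcal{C}^1$ section of $\pi_M : \manifold \to U$) and the submersion property at $y_0$ (since $\partial_{y_n} g(0) = 1$).

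\textbf{The main obstacle.} The heart of the argument lies in proving well-definedness of $g$, namely the implication
\[
\varphi(x_1, \tilde y) = \varphi(x_2, \tilde y) \ \Longrightarrow \ \varphi(x_1, 0) = \varphi(x_2, 0).
\]
Geometrically, this says that the graphs $\tilde y \mapsto (\varphi(x_i, \tilde y), \tilde y)$ for $i = 1, 2$---which are integral submanifolds of the continuous codimension-$1$ distribution on $V$ annihilated by $dy_n - \Phi(\tilde y, y_n) \cdot d\tilde y$---must coincide locally whenever they share a common point. Setting $h(\tilde y) := \varphi(x_1, \tilde y) - \varphi(x_2, \tilde y)$, condition (\ref{item:SteinCurvature1}) yields the \emph{infinitesimal} rigidity $\nabla h(\tilde y) = 0$ at every $\tilde y$ where $h(\tilde y) = 0$. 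Upgrading this to $h \equiv 0$ near the given zero is the delicate point, because $\Phi$ is merely continuous in its second argument and classical Gr\"onwall or Frobenius-type uniqueness does not apply. I expect to push through by exploiting the joint $\mathcal{C}^1$ regularity of $\varphi$ in $(x, \tilde y)$ together with the transversality hypothesis $\nabla_x\varphi \neq 0$, joining $x_1$ and $x_2$ through a continuous family $\{x_s\}_{s \in [0,1]}$ of points satisfying $\varphi(x_s, \tilde y_*) = \varphi(x_1, \tilde y_*)$, and propagating the equality of initial data to a full neighbourhood of $\tilde y_*$ by exploiting the structural equation $\nabla_{\tilde y}\varphi = \Phi(\tilde y, \varphi)$ along this family.
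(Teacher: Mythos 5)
Your reduction to (1)~$\Rightarrow$~(3), the easy converse, and the translation of condition (1) into the structure equation $\nabla_{\tilde y}\varphi(x,\tilde y)=\Phi(\tilde y,\varphi(x,\tilde y))$ are all fine and parallel to what the paper does. But the step you flag as ``the main obstacle'' is a genuine gap, not a routine verification, and the strategy you sketch for closing it does not work as stated. The information you extract at the points where two leaves meet is only the infinitesimal rigidity $\nabla h=0$ on $\{h=0\}$ for $h(\tilde y)=\varphi(x_1,\tilde y)-\varphi(x_2,\tilde y)$; this is far too weak to conclude $h\equiv 0$ (already $h(t)=t^2$ is a $\mathcal{C}^1$ function vanishing to first order at its unique zero). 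The global structure equation is stronger, but since $\Phi$ is merely continuous it does not satisfy any Osgood/Lipschitz condition, so integral manifolds of the distribution $dy_n-\Phi(\tilde y,y_n)\cdot d\tilde y$ through a given point need not be unique, and neither Gr\"onwall, Frobenius, nor the proposed homotopy $\{x_s\}$ (which would again require a uniqueness statement to propagate equality of leaves along $s$) rules out a Peano-type funnel. You correctly identify the difficulty but do not resolve it, so the proof is incomplete at its central point.

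The paper circumvents this entirely by a normal-form lemma (Lemma~\ref{lem:normal_form}): one first applies diffeomorphisms of $M$ and $N$ separately so that $\manifold=\{x_1=g(\tilde x,y)\}$ with $g(0,y)=y_1$. The point of this normalization is that the one-parameter family of reference leaves $\manifold_{(x_1,0)}=\{y_1=x_1\}$ is \emph{flat} and sweeps out all of $V$, so that condition (1) can be invoked at \emph{every} point $(x,y)\in\manifold\cap(U\times V)$ against the flat leaf through $y$ --- not merely at intersection points of two prescribed leaves. This forces $\ker d_yg(\tilde x,y)=\operatorname{span}(e_2,\dots,e_n)$, i.e.\ $\partial_{y_j}g\equiv 0$ for $j\ge 2$ everywhere on a square neighbourhood, whence $g(\tilde x,y)=w(\tilde x,y_1)$ and one concludes with a single application of the implicit function theorem. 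In other words, the missing idea is to straighten one transversal family of leaves first, which turns the structure equation into $\Phi\equiv\mathrm{const}$ and makes the integration (and your well-definedness of $g$) trivial, with no uniqueness theorem for continuous distributions needed. I would encourage you to either adopt this normalization or supply a genuine proof of the uniqueness step; as written, the argument does not go through.
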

By the symmetry of the two variables, it is enough to prove the equivalence \eqref{item:SteinCurvature1}$\Leftrightarrow$\eqref{item:HilbertTransform1}. The implication \eqref{item:HilbertTransform1}$\Rightarrow$\eqref{item:SteinCurvature1} is clear with the same $U$ and $V$, because in that case $T_y \manifold_x$ is the kernel of $d_y g$, which is independent of $x$. The converse is less direct. Both conditions are invariant by diffeomorphisms of product type, that is of the form $(x,y)\mapsto (\phi(x),\psi(y))$. It will be useful to have a description of a local normal form (that is of an element in every orbit) of transverse manifolds.
\begin{lem}\label{lem:normal_form}
Consider a $\mathcal{C}^1$-submanifold $\manifold \subset M \times N$ of codimension $1$ that is transverse at $z_0=(x_0,y_0)\in \manifold$. Then, there are diffeomorphisms $\phi$ and $\psi$ from neighbourhoods $U$ and $V$ of $x_0$ and $y_0$ respectively into $\R^m$ and $\R^n$ satisfying that $\phi(x_0) = 0 = \psi(y_0)$ and such that $$\manifold \cap (U\times V) = (\phi\times \psi)^{-1} \big\{ (x,y) \mid x_1 =  g(x_2,\dots,x_m,y)\big\}$$ for some $\mathcal{C}^1$ function $g \colon \R^{m-1} \times \R^n \to \R$ satisfying $g(0,y)=y_1$ for every $y$.
\end{lem}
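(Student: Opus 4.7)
The plan is to combine the implicit function theorem with a single tailored change of coordinates on $N$ that enforces the normalization $g(0,y)=y_1$.

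After passing to charts, identify neighbourhoods of $x_0$ and $y_0$ with open sets around the origin in $\R^m$ and $\R^n$. Since $\manifold$ has codimension $1$, it is locally cut out by a $\mathcal{C}^1$ submersion $F\colon U_0\times V_0 \to \R$ vanishing at $(0,0)$. The transversality hypothesis is equivalent to $\nabla_x F(0,0)\neq 0$ \emph{and} $\nabla_y F(0,0)\neq 0$: the projection of $\ker dF(0,0)$ onto $T_{x_0}M$ is surjective exactly when $\nabla_y F(0,0)\neq 0$, and symmetrically for the other factor. After permuting $x$-coordinates we may assume $\partial_{x_1} F(0,0)\neq 0$, and the implicit function theorem produces a $\mathcal{C}^1$ map $G$, defined near the origin of $\R^{m-1}\times\R^n$ with $G(0,0)=0$, such that $\manifold$ coincides locally with $\{x_1 = G(x_2,\dots,x_m,y)\}$.

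To achieve the normalization, I would differentiate $F(G(x',y),x',y)=0$ in $y_j$ at the origin, which gives $\partial_{y_j}G(0,0) = -\partial_{y_j}F(0,0)/\partial_{x_1}F(0,0)$. Because $\nabla_y F(0,0)\neq 0$, some $\partial_{y_j}G(0,0)$ is nonzero, and after permuting $y$-coordinates we may take $j=1$. Define
\[
\psi(y) = \bigl(G(0,y),\, y_2, \dots, y_n\bigr).
\]
The Jacobian of $\psi$ at $0$ is upper triangular with diagonal $\partial_{y_1}G(0,0),1,\dots,1$, hence invertible, so $\psi$ is a local diffeomorphism fixing the origin. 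Introducing $\tilde y = \psi(y)$ and $\tilde G(x_2,\dots,x_m,\tilde y)=G(x_2,\dots,x_m,\psi^{-1}(\tilde y))$, the submanifold becomes $\{x_1=\tilde G(x_2,\dots,x_m,\tilde y)\}$, and by construction $\tilde G(0,\tilde y) = G(0,\psi^{-1}(\tilde y))$ is the first coordinate of $\psi(\psi^{-1}(\tilde y))=\tilde y$, which is $\tilde y_1$ as required.

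There is no substantial obstacle: the argument is an implicit function theorem followed by the coordinate change $\psi$, whose first component is \emph{engineered} to absorb the map $y\mapsto G(0,y)$ into the new first coordinate. Taking $\phi$ to be the initial $x$-chart (composed with the permutation of the first $m$ coordinates), and $\psi$ to be the change of variable just described (composed with the $y$-chart and permutation of the last $n$ coordinates), one obtains exactly the claimed normal form.
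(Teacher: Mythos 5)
Your argument is correct and follows essentially the same route as the paper: a first application of the implicit function theorem (after arranging $\partial_{x_1}F\neq 0$) to write $\manifold$ as a graph $x_1=G(x',y)$, followed by a change of variable on $N$ built from $y\mapsto G(0,y)$, whose nonvanishing differential at the origin is exactly the second half of the transversality hypothesis. The only difference is cosmetic: where the paper invokes the submersion theorem to produce $\psi$ with $h(0,y)=\psi(y)_1$, you construct $\psi$ explicitly and verify its Jacobian is invertible, which amounts to reproving that theorem in this instance.
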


\begin{proof}
By the transversality assumption that $T_{z_0}\manifold$ surjects onto $T_{y_0} M$, we see that $T_{z_0} \manifold \cap (T_{x_0}M \oplus 0) \neq T_{x_0} M \oplus 0$. Thus, by applying a local diffeomorphism $\phi:M \to \R^m$ we can assume that $M=\R^m$, $x_0=0$ and $(1,0, \ldots,0)\notin T_{z_0} \manifold$. Then by the implicit function theorem, there is a $\mathcal{C}^1$ function $h \colon \R^{m-1} \times N \to \R$ such that, on a neighbourhood of $(0,y_0)$ $$\manifold = \big\{(x,y) \mid x_1 = h(x_2,\dots,x_m,y)\big\}.$$ The function $h(0,\cdot)$ vanishes at $y_0$ and, by the second half of the transversality assumption, has nonzero differential at $y_0$. By the implicit function theorem (or the surjection theorem) again, there is a diffeomorphism $\psi$ from a neighbourhood of $y_0$ into $\R^n$ vanishing at $y_0$ and such that $h(0,y) = \psi(y)_1$ for every $y$ close enough to $y_0$. This proves the lemma with $g(0,y)=h(0,\psi^{-1}(y))$.
\end{proof}

%\begin{rem} If I am not mistaken, another way to state Lemma~\ref{lem:normal_form} is as follows: let $L\subset M$ be a one-dimensional manifold cutting $M$ at $x_0$ transversely, and for every $y$ close to $y_0$, let $f_L(y)$ be the unique element of $L$ close to $x_0$ such that $(f(y),y) \in \manifold$. Then the rank d-1 sub-fiber bundle $E(L)_y = T_y \manifold^{f(y)} \subset T_y N$ is completely integrable. Should that be clear from Frobenius theorem? \end{rem}

Now we can prove the implication \eqref{item:SteinCurvature1}$\Rightarrow$\eqref{item:HilbertTransform1} in Theorem \ref{thm:hypersurfaces_in_products}. Observe that both conditions are unchanged if we replace $(x_0,y_0,\manifold)$ by $(\phi(x_0),\psi(y_0),\phi \times \psi(\manifold))$ for local diffeomorphisms. Therefore, by the normal form Lemma~\ref{lem:normal_form}, we may assume that $M \times N=\R^m \times \R^n$, $(x_0,y_0)=(0,0)$ and $$\Pi \cap (U\times V) = \big\{ (x,y)\in U \times V \mid x_1 = g(x_2,\dots,x_m,y)\big\}$$ for some $\mathcal{C}^1$ function $g\colon \R^{m-1}\times\R^n \to \R$ satisfying $g(0,y)=y_1$. Then, for every $(x,y) \in \manifold$ with $x=(x_1,\tilde{x})$, we have $T_y \manifold_x = \ker( d_y g(\tilde x,y))$. Let $\tilde U \subset \R^{m-1},\tilde V \subset V$ be square neighbourhoods of $0$ such that $(g(\tilde x,y),\tilde x) \in U$ for $(\tilde x,y) \in \tilde U \times \tilde V$. Then for every such $\tilde x,y$, condition \eqref{item:SteinCurvature1} applied to $x=(g(\tilde x,y),\tilde x)$ and $x' = (g(0,y),0)$ yield $\ker d_y g( \tilde x,y) = \ker d_y g(0,y) = \vect(e_2,\dots,e_n)$. In particular, $\partial_{y_j} g(\tilde x,y)=0$ for every $j \geq 2$, so (since $\tilde V$ is a square) $g(\tilde x,y) = w(\tilde x,y_1)$ for certain $\mathcal{C}^1$ function $w:\R^{m-1}\times \R \to \R$ satisfying $w(0,s)=s$ for all $s$. By the implicit function theorem, we get $$\big\{(x_1,\tilde x,s) \mid x_1= w(\tilde x,s) \big\} = \big\{(x_1,\tilde x,s) \mid s = u(x_1,\tilde x) \big\}$$ locally for a $\mathcal{C}^1$ function $u \colon \R^m \to \R$. This completes the proof of Theorem \ref{thm:hypersurfaces_in_products}.

\subsection{Proof of Theorem A: Transference on triangular truncations}\label{sec:transference}

The implication \eqref{item:HilbertTransform} $\Rightarrow$ \eqref{item:Spbounded} in Theorem A is immediate from the boundedness of the triangular projection on Schatten $p$-classes for $1 < p < \infty$ and the transference Lemma \ref{lem:transfere} above. This completes the proof of Theorem A. \fin

\subsection{Relatively compact domains.} Using a partition of unity argument, it is not difficult to prove that Theorem A holds globally for relatively compact fully transverse domains $\Sigma$. More precisely, let $p \in (1,\infty)\setminus\{2\}$ and consider a relatively compact domain $\Sigma$ in $M \times N$ which is transverse at every point of $\partial \Sigma$. Then $S_\Sigma$ is an $S_p$-bounded multiplier if and only if any of the equivalent conditions \eqref{item:SteinCurvature} and \eqref{item:HilbertTransform} in the statement of Theorem A holds at every point of the boundary. 

 \begin{rem}\label{rem:local_vs_global}
\emph{The fact that $\Sigma$ is relatively compact is crucial in the preceding argument. For instance,  \eqref{item:SteinCurvature} holds trivially at every boundary point for every fully transverse $\mathcal{C}^1$-domain of $\R\times \R$. But there are examples of such domains |which are Toeplitz, arising from Fourier symbols| that do not define an $S_p$ multiplier for any $p \neq 2$. An explicit construction is given in \cite[Appendix A]{CPPR}.}
 \end{rem}

At this point, it is interesting to observe the difference here between Fourier and Schur idempotents. We know from Fefferman's theorem \cite{Fe} that there are no Fourier $L_p$-idempotents associated to smooth compact domains. However, there are plenty such Schur idempotents: necessarily nonToeplitz, since Toeplitz symbols give rise to Fourier idempotents. A funny instance is precisely given by other forms of ball multipliers $\Sigma_R = \{ (x,y) \hskip-1pt \in \hskip-1pt \R^n \times \R^n\hskip-2pt : |x|^2 + |y|^2 < R^2 \}$, which are clearly $S_p$-bounded and have been recently used by Chuah-Liu-Mei in their recent paper \cite[Example 4.4]{CLM}. Theorem A proves in addition that the spheres $\partial \Sigma_R$ satisfy the zero-curvature condition \eqref{item:SteinCurvature}. More intriguing examples are the spherical Hilbert transforms defined in the Introduction as 
\[ H_\mathbf{S}: A \mapsto \Big( \mathrm{sgn}\langle x,y \rangle A_{xy} \Big)_{x,y \in \mathbf{S}^n}. \] More generally, we also define $H_{\mathbf{S},\delta}(A) = (\chi_{\langle x,y \rangle > \delta} A_{xy})$ for any $\delta \in (-1,1)$. The case $\delta = 0$ corresponds to $\frac12 (1 + \mathrm{sgn} \langle x,y \rangle)$ which is formally equivalent to $H_\mathbf{S}$ above. 

\begin{cor} \label{Cor-SHT}
Let us fix $1 < p \neq 2 < \infty$. Then, the $n$-dimensional spherical Hilbert transforms $H_{\mathbf{S},\delta}$ are all $S_p$-bounded for $n = 1$ and $S_p$-unbounded for $n \ge 2$. 
\end{cor}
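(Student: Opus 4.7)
The plan is to apply Theorem~A together with its global extension to relatively compact fully transverse domains, taking $M = N = \mathbf{S}^n$ and $\Sigma_\delta = \{(x,y) \in \mathbf{S}^n \times \mathbf{S}^n : \langle x,y\rangle > \delta\}$. As a preliminary step, I would compute the normals to $\partial \Sigma_\delta$: writing $f(x,y) = \langle x,y\rangle - \delta$ and projecting the ambient gradient onto $T_x\mathbf{S}^n \oplus T_y\mathbf{S}^n$, one obtains $\mathbf{n}_1(x,y) = y - \delta x$ and $\mathbf{n}_2(x,y) = x - \delta y$, both of norm $\sqrt{1-\delta^2} > 0$ since $\delta \in (-1,1)$. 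Hence $\Sigma_\delta$ is a fully transverse, relatively compact $\mathcal{C}^1$-domain, and the equivalences of Theorem~A apply at every point of $\partial \Sigma_\delta$.

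For $n \geq 2$, I would verify that the zero-curvature condition \eqref{item:SteinCurvature} fails at every boundary point. Fixing $y_0 \in \mathbf{S}^n$, the fiber $\{x \in \mathbf{S}^n : \langle x,y_0\rangle = \delta\}$ is parametrized as $x = \delta y_0 + \sqrt{1-\delta^2}\, u$ with $u$ ranging over the unit sphere of the $n$-dimensional vector space $T_{y_0}\mathbf{S}^n$. For such $x$, the tangent space $T_{y_0}\partial(\Sigma_\delta)_x$ is the hyperplane in $T_{y_0}\mathbf{S}^n$ orthogonal to $\mathbf{n}_2(x,y_0) = \sqrt{1-\delta^2}\,u$. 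Because $n \geq 2$, this unit sphere has positive dimension, so I can pick $x_1,x_2$ in the fiber with the corresponding unit vectors $u_1, u_2$ non-parallel. The hyperplanes $T_{y_0}\partial(\Sigma_\delta)_{x_1}$ and $T_{y_0}\partial(\Sigma_\delta)_{x_2}$ are then distinct, violating \eqref{item:SteinCurvature}; by Theorem~A, $H_{\mathbf{S},\delta}$ is $S_p$-unbounded.

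For $n = 1$, I would instead verify the triangular-truncation condition \eqref{item:HilbertTransform} at every boundary point. Parametrizing $\mathbf{S}^1$ locally by angle charts, the inequality $\cos(\theta_x - \theta_y) > \delta$ reads, near a boundary point $(\theta_x^0,\theta_y^0)$ with $\theta_x^0 - \theta_y^0 \equiv \pm\arccos\delta \pmod{2\pi}$, as one of the two local inequalities $\pm \theta_x > \pm \theta_y + \mathrm{const}$. This is exactly of the form $f_1(x) > f_2(y)$ with $\mathcal{C}^1$ functions $f_1, f_2$ on the two factors, so \eqref{item:HilbertTransform} holds. The global version of Theorem~A for relatively compact fully transverse domains then yields $S_p$-boundedness.

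Essentially all of the substantial analytical work is absorbed into Theorem~A; the main obstacle in the proof of the corollary is the elementary but careful geometric computation ensuring that the fiber of $\partial \Sigma_\delta$ genuinely sweeps out enough non-parallel normals when $n \geq 2$. Once that is done, the local nature of the implication \eqref{item:Spbounded} $\Rightarrow$ \eqref{item:SteinCurvature} in Theorem~A immediately upgrades a single failure of zero-curvature to global $S_p$-unboundedness, while in the case $n=1$ the tangent hyperplanes to the fibers are trivially $\{0\}$ and the boundary admits a genuine triangular-truncation description.
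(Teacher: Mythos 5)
Your proposal is correct and follows essentially the same route as the paper: verify that $\Sigma_\delta$ is a relatively compact, fully transverse $\mathcal{C}^1$-domain, show that the zero-curvature condition \eqref{item:SteinCurvature} fails for $n\ge 2$ because the normals $\mathbf{n}_2(x,y_0)=x-\delta y_0=\sqrt{1-\delta^2}\,u$ sweep out a positive-dimensional sphere of non-parallel directions as $x$ moves along the fiber, and for $n=1$ express the symbol as a triangular truncation in polar coordinates. The paper's proof is just terser, leaving the normal computation to Figure~\ref{picture:sphere} and additionally remarking that for $n=1$ condition \eqref{item:SteinCurvature} holds trivially because the tangent spaces $T_y\partial\Sigma_x$ are zero-dimensional.
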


\begin{proof} Spherical Hilbert transforms arise from relatively compact domains whose boundary is fully transverse. In particular, we may apply Theorem A. Next, in dimension $1$ the assertion follows since the zero-curvature condition \eqref{item:SteinCurvature} is trivially satisfied. Alternatively, the symbol can be expressed as a triangular truncation in terms of the polar coordinates of $x$ and $y$. When $n \ge 2$, it is easily checked that the tangent spaces at $\partial \Sigma_{x_1}$ and $\partial \Sigma_{x_2}$ differ at their intersection points. This was illustrated for $n=2$ in Figure~\ref{picture:sphere}. Theorem A implies the assertion. \end{proof}

\begin{rem}
\emph{Alternatively, Corollary~\ref{Cor-SHT} also follows as a special case of Corollary B2. Indeed, Lemma~\ref{lem:transfere} implies that $H_{\mathbf{S},\delta}$ has the same norm as the Schur multiplier on $\SO(n+1) \times \SO(n+1)$ with symbol $(g,h)\mapsto \mathrm{sgn}( (g^{-1}h)_{1,1})$, which by \cite{CS} coincides with the cb-norm of the Fourier multipler with symbol $g\mapsto \mathrm{sgn}(g_{1,1})$. But for $n\geq 2$ $\SO(n+1)$ is a simple Lie group not locally isomorphic to $\SL_2(\R)$, so it does not carry \emph{any} idempotent multiplier.}
\end{rem}
\begin{rem}
\emph{We may also consider symbols $\Sigma_\delta = \{(x,y) \in \R^n: \langle x,y \rangle > \delta \}$ in the full Euclidean space for $n \ge 2$. In this case, Theorem A gives $S_p$-unboundedness for $(n,\delta) \neq (2,0)$. By \cite[Theorem 1.18]{LdlS} and since $S_{\Sigma_0} = H_{\mathbf{S},0} \otimes \mathrm{id}_{\R_+}$, it turns out that $S_p$-boundedness for $(n,\delta) = (2,0)$ follows from Corollary~\ref{Cor-SHT}.}
\end{rem}

\subsection{Curvature on smoother domains.} Our curvature condition \eqref{item:SteinCurvature} admits an alternative formulation under additional regularity. Let $\Sigma$ be a $\mathcal{C}^2$-domain. Then $\Sigma \cap (U \times V) = \big\{ (x, y) : F(x, y) > 0 \big\}$ for some $\mathcal{C}^2$-function $F: M \times N \to \R$ and small enough neighbourhoods $U,V$. Our curvature condition holds if and only if we have 
\[ \Big\langle d_x \hskip-1pt d_yF(x, y), u \otimes v \Big\rangle := u^\mathrm{t} \cdot \Big( \partial_{x_j} \partial_{y_k} F(x,y) \Big)_{j,k} \cdot v = 0 \]
for $(u,v) \in \ker d_xF(x, y) \times \ker d_yF(x, y)$ at every $(x, y) \in \partial \Sigma \cap (U \times V)$. The argument is quite simple. By fixing boundary points $(x,y)$ and vectors $(u,v)$ as specified above, let $\gamma: [0,1] \to \partial \Sigma^y \cap U$ be a curve with $\gamma(0) = x$ and $\gamma'(0)=u$, and set $h(s) = d_yF(\gamma(s),y)$. The curvature condition \eqref{item:SteinCurvature} means that $h(s) = \alpha(s) h(0)$ for some nonvanishing function $\alpha: [0,1] \to \R$. In particular, we get 
\[\Big\langle d_x \hskip-1pt d_yF(x, y), u \otimes v \Big\rangle = \big\langle h'(0),v \big\rangle = \alpha'(0) \big\langle h(0),v \big\rangle = 0.\]
Reciprocally, assume that the $\mathcal{C}^2$-curvature condition above holds. Consider a curve $\gamma\colon [0,1] \to \partial \Sigma^y \cap U$ and define $h$ as above. Since we have $\gamma'(s) \in \ker d_xF(\gamma(s),y)$ and $h'(s) = \gamma'(s)^\mathrm{t} \cdot d_xd_yF(\gamma(s),y)$ by construction, it turns out that $\langle h'(s),v \rangle$ equals $\langle d_xd_yF(\gamma(s),y), \gamma'(s) \otimes v \rangle$ for any $v \in \ker d_yF(\gamma(s),y)$. Applying the $\mathcal{C}^2$-curvature condition, this implies that $h'(s)$ is parallel to $h(s)$ for every $s$, which leads to the ODE 
\[\left. \begin{array}{rcl} h'(s) & = & \lambda(s) h(s) \\ h(0) & = & d_yF(x,y) \end{array} \right\} \Rightarrow h(s) = \exp \Big( \int_0^s \lambda(t)dt \Big) h(0) = \alpha(s) h(0)\] for a nonvanishing $\alpha: [0,1] \to \R$. This implies condition \eqref{item:SteinCurvature} in Theorem A.

\begin{rem}
\emph{In this form, \eqref{item:SteinCurvature} is invariant under exchanging $x$ and $y$, which is clear a posteriori without the $\mathcal{C}^2$ assumption, since both \eqref{item:Spbounded} and \eqref{item:HilbertTransform} are. On the other hand, condition \eqref{item:SteinCurvature} in Theorem A seems new, while its $\mathcal{C}^2$-form above is quite similar to the \emph{rotational curvature} $\det [d_xd_y F(x,y)] $ defined by Stein in \cite[XI.3.1]{St}}.
\end{rem}

\subsection{On the transversality condition.} 
The transversality assumption has been essential in our proofs of \eqref{item:Spbounded}$\Rightarrow$\eqref{item:SteinCurvature}$\Rightarrow$\eqref{item:HilbertTransform} in Theorem A, but it is not clear that it is really needed for the statement. Indeed, conditions \eqref{item:Spbounded} and \eqref{item:HilbertTransform} make sense without it, and \eqref{item:SteinCurvature} is already meaningful if one only assumes that $\mathbf{n}_2(x_0,y_0) \neq 0$, and we do not have an example where the equivalence fails. It is likely that such examples can be found, but probably not for domains with analytic boundary. We leave these questions as open problems. In the degenerate case where $\mathbf{n}_1$ is identically $0$, or equivalently when $\Sigma$ is locally of the form $\Sigma = \{(x,y): y \in \Omega \}$, all conditions in Theorem~A hold. The $S_p$-boundedness is in that case even true for $1 \leq p \leq \infty$ because the Schur multiplier whose symbol is the indicator function of $\Sigma$ is just the right-multiplication by the orthogonal projection on $L_2(\Omega)$. 

\section{\bf \large Idempotent Fourier multipliers on Lie groups}\label{sec:fourier}
Let $\G$ be a Lie group, that we equip with a left Haar measure. As to every locally compact group, we can associate to it: 
\begin{itemize}
    \item Its von Neumann algebra $\cL \G$. 
    \item The noncommutative $L_p$ spaces $L_p(\cL \G)$ for $1 \leq p < \infty$.
    \item The Fourier $L_p$-multipliers $T_m$ with symbol $m\colon \G \to \C$.
\end{itemize}
The group von Neumann algebra $\cL \G$ is the weak-$*$ closure in $B(L_2(\G))$ of the algebra of convolution operators $\lambda(f):\xi \in L_2(\G) \mapsto f \ast \xi$ for $f \in \mathcal{C}_c(\G)$. When $\G$ is unimodular, its $L_p$-theory is quite elementary: $\cL \G$ carries a natural semifinite trace $\tau$ given by $\tau( \lambda(f)^* \lambda(f)) = \int |f(g)|^2 dg$ for every $f \in L_2(\G)$ with $\lambda(f) \in \cL \G$; $L_p(\cL \G)$ is then defined as the completion of $\{x \in \cL \G : \|x\|_p<\infty\}$ for the norm $\|x\|_p = \tau( |x|^p)^{1/p}$. It turns out that $L_p(\cL \G)$ contains $\{\lambda(f) : f \in \mathcal{C}_c(\G)\ast \mathcal{C}_c(\G)\}$ as a dense subspace. A bounded measurable $m \colon \G \to \C$ defines a Fourier $L_p$-multiplier if $\lambda(f) \mapsto \lambda(mf)$ extends to a bounded map $T_m$ on $L_p(\cL \G)$. These definitions are more involved for nonunimodular groups and will be recalled in Section~\ref{sec:nonunimodular} below. 

When $p=1,\infty$, a bounded measurable function $m \colon \G\to\C$ defines a completely bounded Fourier $L_p$-multiplier if and only if the Schur multiplier associated to the symbol $(g,h)\mapsto m(g h^{-1})$ |called the Herz-Schur multiplier with symbol $m$ and denoted $S_m$| is completely $S_p$-bounded, with same norms \cite{BF}. For amenable groups, the same holds for $1 < p < \infty$ \cite{CS,NR}, and it is an intriguing open problem whether this holds beyond amenable groups. We shall use that this always holds locally. This phenomenon was discovered recently \cite[Theorem 1.4]{PRS} when $p$ is an even integer and $\G$ unimodular, and the following generalizes this to the general case, see \cite{CJKM} for other local results of similar nature. In what follows, the Fourier support of an element $x \in L_p(\cL \G)$ will refer to the smallest closed subset $\Lambda$ such that $T_m(x)=0$ for every Fourier $L_p$-multiplier with symbol $m$ whose support is a compact subset of $\G\setminus \Lambda$. When $\G$ is unimodular and $x=\lambda(f)$ for $f \in \mathcal{C}_c(\G) \ast \mathcal{C}_c(\G)$ it is easy to see that this coincides with the support of the function $f$.%OLD: In what follows, the Fourier support of an element $x \in L_p(\cL \G)$ will refer to the smallest closed subset $\Lambda$ such that $T_m(x)=x$ for every Fourier $L_p$-multiplier with symbol $m$ which is equal to $1$ on $\Lambda$. When $\G$ is unimodular and $x=\lambda(f)$ for $f \in \mathcal{C}_c(\G) \ast \mathcal{C}_c(\G)$, it is easy to see that this coincides with the support of the function $f$.

\begin{thm}\label{thm:local_transference_general} Let $\G$ be a locally compact group and consider a bounded measurable function $m \colon \G \to \C$. Then, the following are equivalent for $p \in (1,\infty)$ and $g_0 \in \G$ $\hskip-3pt :$
  \begin{enumerate}
  \item\label{item:localFourier} There is a neighbourhood $U$ of $g_0$ such that the restriction $T_{m,U}$ of $T_m$ to the space of elements of $L_p(\cL \G)$ Fourier supported in $U$ is completely bounded.

\vskip2pt

  \item\label{item:extensionFourier} There exists a function $\varphi \colon \G \to \C$ which equals $1$ on a neighbourhood of $g_0$ such that $\varphi m$ defines a completely bounded Fourier multiplier on $L_p(\cL \G)$.

\vskip2pt

  \item\label{item:localHerzSchur} There are open sets $V,W \subset \G$ with $g_0 \in V W^{-1}$ such that the function $(g,h) \in V \times W \mapsto m(g h^{-1})$ defines a completely bounded Schur multiplier on the Schatten class $S_p(L_2(V),L_2(W))$. 
  \end{enumerate}
\end{thm}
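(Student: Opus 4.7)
The implication (2)$\Rightarrow$(1) is immediate. Picking a neighbourhood $U$ of $g_0$ on which $\varphi \equiv 1$, any $x \in L_p(\cL\G)$ with Fourier support in $U$ satisfies $T_{(1-\varphi)m}(x) = 0$ by definition of Fourier support, so $T_m(x) = T_{\varphi m}(x)$; the same holds under matrix amplification, so the global cb bound on $T_{\varphi m}$ restricts to a local cb bound on $T_m$.

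For (1)$\Rightarrow$(3) the plan is to localize the ultraproduct transference of Caspers--de la Salle and Neuwirth--Ricard \cite{CS,NR}. First I would choose relatively compact open sets $V,W$ with $VW^{-1}\subset U$ and $g_0\in V W^{-1}$, where $U$ is the neighbourhood provided by (1). The key point is that for finite-rank operators $T=\sum_i |f_i\rangle\langle h_i|$ in $S_p(L_2(V),L_2(W))$, the action of the Herz--Schur symbol $(g,h)\mapsto m(gh^{-1})$ on $T$ can be realized, via matrix coefficients of the left regular representation, as a compression of the Fourier multiplier $T_m$ acting on elements of $L_p(\cL\G)$ whose Fourier support is contained in the relatively compact set $V W^{-1}\subset U$. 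An ultraproduct passage along rescaled cutoffs then transfers the local cb hypothesis (1) to a cb bound on the Herz--Schur symbol on $V\times W$. For nonunimodular $\G$ this construction must be carried out in the Haagerup $L_p$ formalism, which introduces modular corrections in the intertwining identity.

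The converse (3)$\Rightarrow$(2) is the extension step. I would take $\varphi$ of the form $\varphi(g)=c^{-1}\langle \lambda(g)\xi,\eta\rangle$ for compactly supported $\xi,\eta$ chosen so that $\varphi\equiv 1$ on a neighbourhood of $g_0$. Since matrix coefficients of $\lambda$ are automatically completely bounded Herz--Schur symbols of norm at most $\|\xi\|_2\|\eta\|_2$, the symbol $\varphi m$ factors at the Herz--Schur level as the local cb symbol from (3) times a cutoff factor, yielding a globally cb Herz--Schur symbol; applying the intertwining of the previous step in reverse (equivalently, using Bo{\.z}ejko--Fendler at the endpoints combined with real interpolation) upgrades this to the desired global cb Fourier multiplier $T_{\varphi m}$. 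The hard part, and the main advance over \cite{PRS}, is the local transference (1)$\Leftrightarrow$(3): the amenable global argument uses a F\o lner net, and \cite{PRS} exploited an $L_p\cap L_q$ intersection available only for even integer $p$ and unimodular $\G$; the present setting requires a substitute local ultraproduct embedding that works without amenability, without parity constraints on $p$, and compatibly with the twisted Haagerup $L_p$-structure of nonunimodular groups.
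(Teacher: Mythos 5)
Your easy implications are sound, but the theorem's actual content is missing. The cycle you chose, $(2)\Rightarrow(1)\Rightarrow(3)\Rightarrow(2)$, forces you to prove $(3)\Rightarrow(2)$, i.e.\ to pass from a \emph{local} Schur bound at a \emph{single} exponent $p$ back to a globally cb Fourier multiplier, and neither of your suggested routes works. The fallback ``Bo\.zejko--Fendler at the endpoints combined with interpolation'' is unavailable: hypothesis (3) gives cb-boundedness of the Herz--Schur symbol on $S_p$ only, not on $S_1$ or $S_\infty$, so there are no endpoint bounds to interpolate; and even granting them, the passage from a global cb Schur bound to a global cb Fourier bound at a fixed $1<p<\infty$ is precisely the transference problem the paper states is open for non-amenable groups. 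There is a further error in the same step: multiplying the local symbol on $V\times W$ by the cutoff $\varphi(gh^{-1})$ does not produce a globally defined cb Schur symbol, because $\{(g,h): gh^{-1}\in\supp\varphi\}$ is a band in $\G\times\G$, not a subset of $V\times W$. Your remaining route, the ``reverse intertwining''\,/\,``substitute local ultraproduct embedding,'' is exactly the object whose construction constitutes the proof; you name it and list the properties it must have, but you do not build it.

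The paper runs the cycle the other way, $(1)\Rightarrow(2)\Rightarrow(3)\Rightarrow(1)$, so that the only hard step is $(3)\Rightarrow(1)$, and that step is delivered by Lemma~\ref{lem:local_isomorphic_embedding}: with $\phi=\frac{1}{|U|}\chi_U$ and $\psi=\chi_{U^{-1}U}$ viewed as multiplication operators, the maps $J_p(x)=\phi^{1/p}x\,\psi^{1/p}$ are cb from $L_p(\cL\G)$ to $S_p(L_2(V),L_2(W))$ (interpolating the trivial case $p=\infty$ with a Plancherel argument at $p=1$), intertwine Fourier and Herz--Schur multipliers, and---this is the key point---are bounded \emph{below} on elements Fourier supported in $U$, by a duality argument pairing $J_p(x)$ against $J_q(y)$ and exploiting $\int\phi(gh)\psi(h)\,dh=1$ on $U$; the nonunimodular case requires the extra care of Lemma~\ref{lem:compression_nonunimodular}. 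No ultraproduct and no amenability enter anywhere. Your $(1)\Rightarrow(3)$, while salvageable, is also more simply obtained as the composite of the trivial $(1)\Rightarrow(2)$ (compose $T_{m,U}$ with $T_\varphi$ for $\varphi\in A(\G)$ supported in $U$ and equal to $1$ near $g_0$) with the easy global direction Fourier~$\Rightarrow$~Schur of \cite[Theorem 4.2]{CS}, restricted to $V\times W$ with $VW^{-1}$ inside the set where $\varphi=1$.
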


When these conditions hold, we say that $m$ defines locally at $g_0$ a completely bounded Fourier $L_p$-multiplier. The proof is given in Section~\ref{sec:proof_transference}. We can record the following consequence, which is immediate by looking at condition \eqref{item:localHerzSchur}.
\begin{cor}\label{cor:lifting_multipliers}
Let $\G$ be a connected Lie group and denote by $\widetilde \G$ its universal cover. Let $\tilde{g}_0 \in \widetilde{\G}$ be any lift of $g_0 \in \G$. Then $m \colon \G \to \C$ defines locally at $g_0$ a completely bounded Fourier $L_p$-multiplier over $\cL \G$ if and only its lift $\widetilde{m}$ defines locally at $\tilde{g}_0$ a completely bounded Fourier $L_p$-multiplier as well.
\end{cor}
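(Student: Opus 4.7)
The plan is to reduce everything to Schur multipliers via the equivalence \eqref{item:localFourier}$\Leftrightarrow$\eqref{item:localHerzSchur} of Theorem~\ref{thm:local_transference_general}, so the question becomes whether the local cb Schur norms of the Herz--Schur symbol on $\G$ and on $\widetilde{\G}$ coincide. Since the covering map $p\colon \widetilde{\G}\to\G$ is a local diffeomorphism of Lie groups and restricts to a measure-preserving map between sufficiently small open sets (left Haar measure on $\widetilde{\G}$ pushes forward to left Haar measure on $\G$ locally), Lemma~\ref{lem:transfere} should identify the two Schur norms exactly.

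First I would synchronize the neighborhoods. Assume $m$ is locally cb at $g_0$, so by Theorem~\ref{thm:local_transference_general}\eqref{item:localHerzSchur} there exist open $V,W\subset\G$ with $g_0\in VW^{-1}$ on which $(v,w)\mapsto m(vw^{-1})$ is cb Schur. Pick $v_0\in V$, $w_0\in W$ with $g_0 = v_0 w_0^{-1}$, pick any lift $\widetilde{w}_0\in p^{-1}(w_0)$, and set $\widetilde{v}_0 := \widetilde{g}_0 \widetilde{w}_0$, so that $p(\widetilde{v}_0)=v_0$. After shrinking $V$ around $v_0$ and $W$ around $w_0$, choose open neighbourhoods $\widetilde{V}\ni\widetilde{v}_0$, $\widetilde{W}\ni\widetilde{w}_0$ such that $p|_{\widetilde{V}}\colon\widetilde{V}\to V$ and $p|_{\widetilde{W}}\colon\widetilde{W}\to W$ are diffeomorphisms. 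By construction $\widetilde{g}_0=\widetilde{v}_0\widetilde{w}_0^{-1}\in \widetilde{V}\widetilde{W}^{-1}$. The converse direction uses the same argument, starting from neighborhoods of $\widetilde{g}_0$ and projecting down through $p$.

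Next I would transfer the Schur norm across $p\times p$. On $\widetilde{V}\times\widetilde{W}$ the symbol $(\widetilde{v},\widetilde{w})\mapsto \widetilde{m}(\widetilde{v}\widetilde{w}^{-1})$ equals $m_\G \circ (p\times p)$, where $m_\G(v,w)=m(vw^{-1})$. Because $p|_{\widetilde{V}}$ and $p|_{\widetilde{W}}$ are diffeomorphisms whose pushforwards of the Haar measures agree with the Haar measures of $V$ and $W$, the cb version of Lemma~\ref{lem:transfere} (which is exactly what is proved in its argument) gives
\begin{equation*}
\bigl\|\widetilde{m}(\widetilde{v}\widetilde{w}^{-1})\bigr\|_{\mathrm{cb}MS_p(L_2(\widetilde{V}),L_2(\widetilde{W}))} = \bigl\|m(vw^{-1})\bigr\|_{\mathrm{cb}MS_p(L_2(V),L_2(W))}.
\end{equation*}

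Applying Theorem~\ref{thm:local_transference_general}\eqref{item:localHerzSchur} now on $\widetilde{\G}$ at $\widetilde{g}_0$ concludes the proof in both directions. There is no real obstacle: the only minor subtlety is the precise matching of the lifted neighbourhoods so that the prescribed lift $\widetilde{g}_0$ (rather than an arbitrary element of $p^{-1}(g_0)$, which might differ by a nontrivial element of the discrete kernel $\ker p$) actually sits in $\widetilde{V}\widetilde{W}^{-1}$; this is what forces the choice $\widetilde{v}_0=\widetilde{g}_0\widetilde{w}_0$ above.
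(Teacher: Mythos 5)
Your proposal is correct and follows exactly the route the paper intends: the paper declares the corollary ``immediate by looking at condition \eqref{item:localHerzSchur}'' of Theorem~\ref{thm:local_transference_general}, and your argument simply fills in the details of that observation (lifting the neighbourhoods through the covering map, adjusting the lift so that $\tilde{g}_0\in\widetilde{V}\widetilde{W}^{-1}$, and invoking Lemma~\ref{lem:transfere} to identify the two local Schur norms). Nothing is missing.
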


\subsection{Idempotent multipliers}
Now we are ready to prove Theorem B and also Corollaries B1 and B2 from the Introduction. In fact, we shall prove a slightly expanded version of Theorem~B which includes non simply connected groups and Corollary~B1 at once. The groups $\G_1,\G_2,\G_3$ in the statement below refer to the real line $\R$, $\Aff_+(\R)$ and $\widetilde{\PSL}_2(\R)$ as in the Introduction.

\begin{thm}\label{thm:main_Fourier}
Let $p \in (1,\infty)\setminus\{2\}$. Let $\G$ be a connected Lie group, $\A \subset \G$ a $\mathcal{C}^1$-domain and $g_0 \in \partial \Omega$ a point in the boundary of $\A$. Consider the following conditions$\hskip1pt :$
\begin{enumerate}
    \item \label{item:local-Hilbert-tranform} $\chi_\A$ defines locally at $g_0$ a completely bounded Fourier $L_p$-multiplier.

\vskip5pt

    \item \label{item:local_subgroup} There is a smooth action $\G \to \diff(\R)$ by diffeomorphisms on the real line, such that $\A$ coincides on a neighbourhood of $g_0$ with $\{g \in \G \mid g \cdot 0 > g_0 \cdot 0\}$.

\vskip3pt

    \item \label{item:three-HTs} There is $j\in \{1,2,3\}$ and a smooth surjective homomorphism $f\colon \G \to \G_j$ such that the domain $\A$ coincides on a neighbourhood of $g_0$ with $g_0 f^{-1}(\Omega_j)$. 

\vskip3pt

    \item \label{item:Lie-subalg} $\partial \A = g_0 \exp(\mathfrak{h})$ locally near $g_0$ for some codimension $1$ Lie subalgebra $\mathfrak{h} \subset \mathfrak{g}$.
\end{enumerate}
Then $(1) \Leftrightarrow (4) \Leftarrow (2) \Leftrightarrow(3)$. If $\G$ is simply connected, then we also have $(4) \Rightarrow (2)$.
\end{thm}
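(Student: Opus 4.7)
The plan is to prove $(2) \Leftrightarrow (3)$, $(2) \Rightarrow (4)$, $(3) \Rightarrow (1)$, the central equivalence $(1) \Leftrightarrow (4)$, and finally $(4) \Rightarrow (2)$ in the simply connected case. The first three are comparatively quick. The equivalence $(2) \Leftrightarrow (3)$ rests on Lie's classification \cite{zbMATH02681669} of finite-dimensional Lie algebras of vector fields on $\R$: acting transitively, they are (up to local conjugation) subalgebras of $\mathfrak{psl}_2(\R)$ of dimension at most $3$ and integrate to homomorphisms into the three model groups $\G_j$, while the reverse direction is the obvious pullback. For $(2) \Rightarrow (4)$, note that $\partial \A$ coincides locally with $g_0 \cdot \mathrm{Stab}_\G(\alpha_{g_0}(0))$, whose Lie algebra $\mathfrak h = \ker\{X \mapsto \widetilde X(\alpha_{g_0}(0))\}$ is of codimension one because $g_0 \in \partial \A$ forces the orbit map $g \mapsto \alpha_g(0)$ to have nonzero derivative at $g_0$. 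For $(3) \Rightarrow (1)$, left translations and pullbacks by continuous group homomorphisms preserve completely bounded Fourier $L_p$-multiplier norms, so it suffices that each $\chi_{\Omega_j}$ be a cb-Fourier $L_p$-multiplier on $\G_j$: the classical Hilbert transform when $j=1$, Example~\ref{ex:cotlar} when $j=2$, and a consequence of the Cotlar identities of \cite{GPX} when $j=3$.

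The central equivalence $(1) \Leftrightarrow (4)$ will be deduced from Theorem~A applied to the auxiliary Schur symbol
\[\Sigma \; = \; \{(g,h) \in \G \times \G : gh^{-1} \in \A\}.\]
The Baker--Campbell--Hausdorff formula shows that the differential of $(g,h) \mapsto gh^{-1}$ at $(g_0, e)$ is $(X,Y) \mapsto X - Y$ in left-invariant coordinates, so $\partial \Sigma$ is transverse at $(g_0, e)$. By Theorem~\ref{thm:local_transference_general}, condition (1) is equivalent to the local $S_p$-boundedness of $S_{\chi_\Sigma}$ at $(g_0, e)$, which Theorem~A identifies with the zero-curvature condition on $\partial \Sigma$. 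A direct computation yields $T_h \partial \Sigma_g = \{Y \in T_h \G : \mathrm{Ad}(h) Y \in \widetilde{\mathfrak h}(gh^{-1})\}$ with $\widetilde{\mathfrak h}(k) := k^{-1} T_k \partial \A \subset \mathfrak g$; specializing zero-curvature to $h = e$ forces $\widetilde{\mathfrak h}(k)$ to be a fixed subspace $\mathfrak h \subset \mathfrak g$ as $k$ ranges over $\partial \A$ near $g_0$. Equivalently, $\partial \A$ is an integral submanifold of the left-invariant distribution $k \mapsto k \mathfrak h$ on $\G$; by homogeneity of this distribution, the existence of a single integral submanifold forces involutivity via Frobenius, so $\mathfrak h$ is a Lie subalgebra and $\partial \A = g_0 \exp(\mathfrak h)$ locally. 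For the converse $(4) \Rightarrow (1)$, both conditions are local, so Corollary~\ref{cor:lifting_multipliers} allows me to lift to the universal cover and close the loop through the simply connected chain $(4) \Rightarrow (2) \Rightarrow (3) \Rightarrow (1)$.

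It remains to establish $(4) \Rightarrow (2)$ when $\G$ is simply connected. Given the codim-one subalgebra $\mathfrak h \subset \mathfrak g$, I would consider the largest ideal $\mathfrak h_0 \subset \mathfrak h$ of $\mathfrak g$ contained in $\mathfrak h$. The quotient Lie algebra $\mathfrak g / \mathfrak h_0$ then carries a codim-one subalgebra $\mathfrak h / \mathfrak h_0$ which, by construction, contains no nontrivial ideal of $\mathfrak g / \mathfrak h_0$. The classification underlying Lie's theorem identifies the pair $(\mathfrak g / \mathfrak h_0, \mathfrak h / \mathfrak h_0)$ with the Lie algebra of one of the three models $\G_j$ together with its infinitesimal stabilizer of $0$. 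Since $\G$ is simply connected and each $\G_j$ is simply connected, the Lie algebra surjection $\mathfrak g \twoheadrightarrow \mathfrak g / \mathfrak h_0$ integrates to a smooth surjective homomorphism $f \colon \G \to \G_j$, and pulling back the standard action of $\G_j$ on $\R$ via $f$ yields the action required by (2). The main obstacles are (i) the dictionary that turns the analytic zero-curvature condition of Theorem~A into the Lie-theoretic involutivity statement for a left-invariant distribution, and (ii) this final integration step, where passing through the maximal ideal $\mathfrak h_0$ is what bypasses the possible non-closedness of $\exp(\mathfrak h)$ inside $\G$.
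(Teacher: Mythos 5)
Your architecture matches the paper's: translate condition (1) into a local Schur multiplier statement for the product domain $\{(g,h): gh^{-1}\in\A\}$ via Theorem~\ref{thm:local_transference_general} and Lemma~\ref{lem:transfere}, feed this into Theorem A, convert the zero-curvature condition into Lie-theoretic terms, and handle $(2)\Leftrightarrow(3)$ by Lie's classification. Your computation $T_h\partial\Sigma_g=\{Y:\Ad(h)Y\in k^{-1}T_k\partial\A\}$ and the specialization at $h=e$ are correct, and your derivation that $\mathfrak h$ is a subalgebra --- a single integral submanifold of the left-invariant distribution $k\mapsto k\mathfrak h$ forces $[X,Y]_{g_0}\in D_{g_0}$ for left-invariant sections, hence involutivity everywhere by invariance --- is a clean alternative to the paper's route, which extracts the extra condition $\Ad_x\mathfrak h=\mathfrak h$ from the second (right-section) zero-curvature condition and differentiates it at $x=e$. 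Likewise, for $(4)\Rightarrow(2)$ the paper invokes Mostow's theorem to get closedness of $\exp(\mathfrak h)$ and then identifies $\G/\mathrm{H}\cong\R$, whereas you quotient by the maximal ideal $\mathfrak h_0\subset\mathfrak h$ and integrate; both are legitimate, yours leaning harder on the algebraic classification.

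Two points deserve more care. First, your justification of $(3)\Rightarrow(1)$ --- ``pullbacks by continuous group homomorphisms preserve completely bounded Fourier $L_p$-multiplier norms'' --- is not a known fact as a global statement about Fourier multipliers on non-amenable groups for $1<p\neq 2<\infty$ (this is precisely the open transference problem the paper mentions); it must be routed through the local statement: by Theorem~\ref{thm:local_transference_general} condition (1) is a local Herz--Schur condition, and the symbol $(g,h)\mapsto\chi_{\Omega_j}(f(g)f(h)^{-1})$ is the pullback under $f\times f$ of the corresponding symbol on $\G_j\times\G_j$, to which Lemma~\ref{lem:transfere} applies since a surjective Lie homomorphism is a submersion. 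Second, your proof of $(4)\Rightarrow(1)$ detours through the universal cover and the chain $(4)\Rightarrow(2)\Rightarrow(3)\Rightarrow(1)$, so it depends on the global cb-boundedness of the three model idempotents (Example~\ref{ex:cotlar} and the Cotlar identities of \cite{GPX}). The paper instead closes $(4)\Rightarrow(1)$ directly: if $\mathfrak h$ is a subalgebra and $\partial\A=g_0\exp(\mathfrak h)$ locally, then $T_x\partial\A=d_eL_x(\mathfrak h)$ and $\Ad_x\mathfrak h=\mathfrak h$ hold on $\partial\A$ near $g_0$, so the zero-curvature condition of Theorem A is verified and $(1)$ follows from the same transference used in the forward direction. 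That direct verification is self-contained and avoids any reliance on the Cotlar machinery for the equivalence $(1)\Leftrightarrow(4)$; your route is correct but logically heavier.
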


\begin{proof} The main difficulty is to prove the equivalence $\eqref{item:local-Hilbert-tranform}\Leftrightarrow\eqref{item:Lie-subalg}$, which we leave to the end of the proof. The implication $\eqref{item:local_subgroup}\Rightarrow\eqref{item:Lie-subalg}$ is clear, with $\mathrm{H} = \exp(\mathfrak{h})$ the stabilizer of $0$. Under the assumption that $\G$ is simply connected, the converse $\eqref{item:Lie-subalg}\Rightarrow\eqref{item:local_subgroup}$ holds by a Theorem of Mostow \cite{Mo}, which implies that $\mathrm{H}$ is a closed subgroup. Therefore, $\G/\mathrm{H}$ is a $1$-dimensional manifold that is simply connected, so is diffeomorphic to $\R$. The implication $\eqref{item:three-HTs}\Rightarrow\eqref{item:local_subgroup}$ is also clear because $\G_j$ is given as a group of diffeomorphisms of $\R$ with $\Omega_j=\{g \in \G_j\mid g \cdot 0>0\}$. The converse $\eqref{item:local_subgroup}\Rightarrow\eqref{item:three-HTs}$ follows from Lie's classification of (local) actions by diffeomorphism on the real line \cite{zbMATH02681669}, see also \cite{MR0120308} for modern presentations and \cite{Gh} for the global aspect. More precisely, the fact that $g_0$ belongs to the boundary of $\A$ implies that $0$ is not fixed by $\G$ and |here we use that $\G$ is connected| the $\G$-orbit of $0$ is an open interval, so by identifying it with $\R$ we can assume that the $\G$-action is transitive. In that case, the image of $\G$ in $\diff(\R)$ is one of the three groups in condition \eqref{item:three-HTs} of Corollary B1, see \cite[Section 4.1]{Gh} for the details.

Next, let us focus on the equivalence $\eqref{item:local-Hilbert-tranform}\Leftrightarrow\eqref{item:Lie-subalg}$ for general Lie groups. If we translate $\A$ by $g_0^{-1}$, we may assume that $g_0=e$ and the tangent space of $\G$ at $g_0$ identifies with its Lie algebra $\mathfrak{g}$. Also, the tangent space of $\partial \A$ at $g_0$ identifies with a codimension $1$ subspace $\mathfrak h$ of $\mathfrak g$. Define the $\mathcal{C}^1$-manifold
\[\widetilde \A = \Big\{(g,h) \in \G\times \G \mid gh \in \Omega\Big\}.\]
Its sections $\widetilde{\A}_g$ and $\widetilde{\A}^h$ are left and right translates of the $\mathcal{C}^1$-domain $\A$
\begin{equation}\label{eq:identification_sheets_Omega}
    \widetilde{\A}_g:= \big\{h: (g,h) \in \widetilde{\A}\big\} = g^{-1}\A \quad \mbox{and} \quad \widetilde{\A}^h := \big\{g : (g,h) \in \widetilde
\A\big\}=\A h^{-1}.
\end{equation}  In particular, $\widetilde \A$ is transverse at every point of its boundary. By Lemma \ref{lem:transfere} and Theorem~\ref{thm:local_transference_general} we know that \eqref{item:local-Hilbert-tranform} is equivalent to the existence of a neighbourhood of the identity $U \subset \G$ such that $\chi_{\widetilde \A}$ defines a Schur multiplier on $S_p(L_2(U))$. By Theorem~A this is equivalent to the existence of a neighbourhood of the identity $V \subset \G$ such that both conditions below hold: 
\begin{align}
    \label{eq:curvature_group_left} T_{h} \partial\widetilde{\A}_{g_1} = T_h \partial\widetilde{\A}_{g_2} &\textrm{ for every }g_1,g_2,h  \in V \textrm{ such that }g_1h,g_2h  \in \partial \A.\\
        \label{eq:curvature_group_right} T_{g} \partial\widetilde{\A}^{h_1} = T_g \partial\widetilde{\A}^{h_2} &\textrm{ for every }g, h_1,h_2 \in V \textrm{ such that }gh_1,gh_2  \in \partial \A.
\end{align}
By the above idenfications \eqref{eq:identification_sheets_Omega}, if we denote by $L_x,R_x\colon \G\to \G$ the left and right multiplication by $x$, these conditions are equivalent to the existence of a neighbourhood of the identity $W$ such that
\begin{align}
    \label{eq:curvature_group_intrinsic_left}  d_{x_1} L_{x_2x_1^{-1}} (T_{x_1} \partial \A) = T_{x_2} \partial \A &\textrm{ for every }x_1,x_2  \in \partial \A \cap W. \\
        \label{eq:curvature_group_intrinsic_right} d_{x_1} R_{x_1^{-1}x_2} (T_{x_1} \partial \A) = T_{x_2} \partial \A &\textrm{ for every }x_1,x_2  \in \partial \A \cap W.
\end{align}
Indeed, taking $x_j = g_j h$ for $j=1,2$ we have \[T_h (g_j^{-1} \partial \A) = d_{x_j} L_{g_j^{-1}}(T_{x_j} \partial \A).\] 
Composing by $(d_{x_2} L_{g_2^{-1}})^{-1} = d_h L_{g_2}$, and using $(d_h L_{g_2}) \circ (d_{x_1} L_{g_1^{-1}})= d_{x_1} L_{x_2x_1^{-1}}$ by the chain rule, we see that  \eqref{eq:curvature_group_left} is equivalent to \eqref{eq:curvature_group_intrinsic_left}. 
The equivalence for right multiplication maps is entirely similar. Next, recalling that $T_e \partial \A = \mathfrak{h}$ the above conditions can be written in the equivalent forms 
\begin{align}
    \label{eq:tangent_space_left}  d_{e} L_x(\mathfrak{h}) =T_x \partial \A&\textrm{ for every }x \in \partial \A \cap W. \\
        \label{eq:tangent_space_right} d_{e} R_{x} (\mathfrak{h}) = T_{x} \partial \A &\textrm{ for every }x \in \partial \A \cap W.
\end{align}
If we remember that $\Ad_x =d_e (R_{x^{-1}} L_x)$, we obtain that this system is equivalent to $d_{e} L_x(\mathfrak{h}) =T_x \partial \A$ and $\Ad_x \mathfrak{h} = \mathfrak{h}$ for every $x \in \partial \A \cap W$. 
Therefore, we have proved that (\ref{item:local-Hilbert-tranform}) at $g_0=e$ is equivalent to the existence of a neighbourhood of the identity $W$ such that $T_x \partial \A = d_e L_x( \mathfrak{h})$ and $\Ad_x \mathfrak{h} = \mathfrak{h}$ for every $x \in \partial \A \cap W$. These conditions clearly hold if $\mathfrak{h}$ is a Lie algebra and $\partial \A$ locally coincides with the exponential of a neighbourhood of $0$ in $\mathfrak{h}$. Conversely, assume $T_x \partial \A = d_e L_x(\mathfrak{h})$ and $\Ad_x(\mathfrak{h}) = \mathfrak{h}$ for every $x \in \partial \A \cap W$. Making $x$ go to the identity element $e$ in the second condition, we deduce that $\ad_\mathrm{X}(\mathfrak h) \subset \mathfrak{h}$ for every $\mathrm{X} \in \mathfrak{h}$. That is, $\mathfrak{h}$ is a Lie subalgebra. By the local uniqueness of a manifold in $\G$ containing $e$ and whose tangent space at $x$ is $d_e L_x(\mathfrak{h})$ (Frobenius' theorem), we deduce that $\partial \A$ is locally the exponential of a neighbourhood of $0$ in $\mathfrak{h}$. This completes the proof.
\end{proof}
\begin{rem} 
\emph{By a partition of the unity argument, the following global form of Theorem~\ref{thm:main_Fourier} holds: if $p \in (1,\infty)\setminus\{2\}$, $\G$ is a connected Lie group and $\A \subset \G$ a relatively compact $\mathcal{C}^1$-domain, then $\chi_\A$ defines a Fourier cb-$L_p$-multiplier if and only if the condition (4) holds for every point $g_0$ in the boundary of $\A$.}
\end{rem}

\begin{proof}[Proof of Corollary~\emph{B2}]

Assertion i) follows since the quotient of a nilpotent Lie algebra remains nilpotent, so the nonnilpotent examples in Theorem \ref{thm:main_Fourier} \eqref{item:three-HTs} cannot happen when $\G$ is nilpotent. Assertions ii) and iii) follow immediately from Lie's classification \cite{zbMATH02681669}: up to isomorphism, there is a unique pair $(\mathfrak{h},\mathfrak{g})$ where $\mathfrak{g}$ is a simple Lie algebra and $\mathfrak{h}$ is a codimension $1$ subalgebra. It is given by  $\mathfrak{g}=\mathfrak{sl}_2$ and $\mathfrak{h}$ the subalgebra of upper-triangular matrices. This completes the proof.
\end{proof}

\subsection{Local Fourier-Schur transference}%Proof of Theorem~\ref{thm:local_transference_general}}
\label{sec:proof_transference}
The rest of this paper will be devoted to justify Theorem~\ref{thm:local_transference_general}. We will sometimes consider the Fourier algebra $A(\G)$ of $\G$ \cite{Ey}, that is 
\[A(\G) = \Big\{ g \mapsto \int \phi(gh) \psi(h) \, dh : \phi,\psi \in L_2(\G) \Big\}.\]

A form of the following lemma was proved in \cite[Lemma 1.3]{PRS} %{zbMATH07513360} 
for $p$ an even integer and $\G$ unimodular, which was enough for the applications there. Here we need a form valid for every $p$ and every locally compact group. 
\begin{lem}\label{lem:local_isomorphic_embedding} Let $V,W\subset \G$ be open sets and $g_0 \in V W^{-1}$. Then, there are a neighbourhood $U$ of $g_0$, a constant $C$, and maps $J_p\colon L_p(\cL \G) \to S_p(L_2(V),L_2(W))$ for $1\leq p\leq \infty$ intertwining Fourier and Herz-Schur multipliers and such that 
      \[ C^{-1} \|x\|_p \leq \|J_p(x)\|_p \leq C \|x\|_p\]
for every $n\geq 1$ and every $x \in M_n \otimes L_p(\cL \G)$ which is Fourier supported in $U$.
\end{lem}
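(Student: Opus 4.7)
The plan is to construct $J_p$ by compressing the left regular representation with smooth cutoffs adapted to a factorisation $g_0=v_0w_0^{-1}$ with $v_0\in V$ and $w_0\in W$, and to prove the lower bound by exhibiting a partial left inverse whose composition with $J_p$ is an invertible Fourier multiplier on elements Fourier supported in $U$. I would first fix relatively compact open sets $V_0\subset V$ around $v_0$ and $W_0\subset W$ around $w_0$, a smaller neighbourhood $U$ of $g_0$ with $\overline{U}\subset V_0W_0^{-1}$, and cutoffs $\phi,\psi\in \mathcal{C}_c^\infty(\G)$ supported in $V_0,W_0$ respectively, with $\phi(v_0),\psi(w_0)>0$. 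In the unimodular case $J_p$ is then defined by
\[ J_p(\lambda(f))\,=\,M_\phi\,\lambda(f)\,M_{\overline\psi}, \]
i.e.\ the integral operator $L_2(W)\to L_2(V)$ with kernel $K(v,w)=\phi(v)\,f(vw^{-1})\,\overline{\psi(w)}$. For nonunimodular $\G$ the same formula is reinterpreted inside Haagerup's $L_p$-theory, after inserting suitable powers of the modular function so that $J_p$ intertwines the Plancherel weight with the Lebesgue trace.

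The intertwining identity $J_p\circ T_m=S_{\widetilde m}\circ J_p$, with $\widetilde m(v,w)=m(vw^{-1})$, is immediate from the kernel formula. For the upper bound $\|J_p(x)\|_{S_p}\leq C\|x\|_p$ I would complex-interpolate between $p=\infty$, where $\|J_\infty(x)\|\leq \|\phi\|_\infty\|\psi\|_\infty\|x\|$, and $p=2$, where a change of variables controls the Hilbert--Schmidt norm of $K$ by $\||\phi|^2\ast\widetilde{|\psi|^2}\|_\infty^{1/2}\|f\|_{L_2(\G)}$ (with $\widetilde{|\psi|^2}(g)=|\psi(g^{-1})|^2$). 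The endpoint $p=1$ is reached by duality, and matrix amplifications of the construction preserve every estimate, yielding complete boundedness.

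The lower bound is where the Fourier support condition enters crucially. I would introduce a partial left inverse $L_p\colon S_p(L_2(V),L_2(W))\to L_p(\cL\G)$ of dual form, sending $T_K$ to $\lambda(f_K)$ with $f_K(u)=\int\overline{\phi(uw)}\,K(uw,w)\,\psi(w)\,dw$. A direct substitution in the unimodular case yields $L_p\circ J_p(\lambda(f))=\lambda(h\cdot f)$, namely the Fourier multiplier $T_h$ with positive symbol
\[ h(u)\,=\,\int_\G|\phi(uw)|^2\,|\psi(w)|^2\,dw, \]
which lies in $A(\G)\cap\mathcal{C}_c(\G)$ as a convolution of two compactly supported $L_2$-functions. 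Since $h(g_0)>0$ by construction, after shrinking $U$ we obtain $h\geq\delta>0$ on $U$, so local invertibility in the regular Banach algebra $A(\G)$ produces $\widetilde h\in A(\G)$ with $\widetilde h\cdot h=1$ on a neighbourhood of $\overline{U}$. Decomposing $\widetilde h$ into four positive definite elements of $A(\G)$, each of which yields a completely positive, and hence cb, Fourier multiplier on every $L_p(\cL\G)$, we conclude that $T_{\widetilde h}$ is cb on $L_p(\cL\G)$ for all $p$, and the identity $T_{\widetilde h}\circ L_p\circ J_p=\mathrm{id}$ on elements Fourier supported in $U$ delivers the lower bound with uniform constants. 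The main obstacle is the nonunimodular case, where $J_p$, $L_p$ and the kernel identity $L_p\circ J_p=T_h$ must be set up inside Haagerup's $L_p$-theory for the Plancherel weight; this demands careful insertion of the modular function throughout so as to preserve intertwining, interpolation and local inversion, and is what genuinely extends the even-integer unimodular case of \cite{PRS} to the full generality required here.
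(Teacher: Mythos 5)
Your construction is essentially the paper's, but with two genuine variations. First, your $J_p$ does not depend on $p$: you compress by fixed cutoffs $M_\phi\,\lambda(f)\,M_{\overline\psi}$, whereas the paper uses the analytic family $x\mapsto \phi^{1/p}x\,\psi^{1/p}$ and Stein-type interpolation between $p=1$ and $p=\infty$. In the unimodular case both work (your $p=1$ endpoint is best done by the factorization $x=x_1x_2$, $\phi x\psi=(\phi x_1)(x_2\psi)$, rather than ``by duality,'' since the adjoint of the $L_\infty\to S_\infty$ bound goes in the wrong direction). Second, and more interestingly, your lower bound runs through an explicit partial left inverse $L_p=J_q^*$ and the local inversion of the positive symbol $h=|\phi|^2\ast|\check\psi|^2$ in the regular Banach algebra $A(\G)$. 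The paper avoids any inversion by \emph{choosing} the cutoffs so that the composed symbol is exactly $1$ on $U$: with $\phi=\frac{1}{|U|}\chi_U$ and $\psi=\chi_{U^{-1}U}$ one has $\int\phi(gh)\psi(h)\,dh=1$ for all $g\in U$, and then the pairing $\Tr\big(J_p(x)J_q(y)^*\big)=\tau(xy^*)$ for $x$ Fourier supported in $U$ gives the lower bound directly by H\"older. Your route costs you the standard facts that $A(\G)$ is regular with spectrum $\G$ and that $A(\G)$-multipliers are completely bounded on every $L_p(\cL\G)$, but it is correct and buys a cleaner conceptual picture ($J_p$ is invertible up to a harmless Fourier multiplier); the paper's choice of $\phi,\psi$ makes that multiplier literally the identity on the relevant subspace.

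The one place where your write-up falls short of the statement is the nonunimodular case, which you defer with a promissory note about ``inserting powers of the modular function.'' This is not a cosmetic issue: the lemma is needed for arbitrary locally compact groups (the affine group $\G_2=\Aff_+(\R)$, one of the three fundamental models, is nonunimodular), and making sense of $\phi\,x\,\psi$ for $x\in L_p(\cL\G)$, of the trace pairing, and of the interpolation is exactly where the paper's technical work lies: it is done via the Connes--Hilsum spatial $L_p$-spaces and Terp's Hausdorff--Young theorem \cite{Te}, packaged as Lemma~\ref{lem:compression_nonunimodular}. The good news for your approach is that, once that lemma is available, the duality identity \eqref{eq:duality_braket} with $\phi'=\phi$, $\psi'=\psi$ produces precisely your symbol $h$ with no visible modular factors (they are absorbed into the definition of the spatial $L_p$-spaces), so your $A(\G)$-inversion argument goes through verbatim. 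But as written, the nonunimodular half of the proof is asserted rather than proved, and you should either carry out the analogue of Lemma~\ref{lem:compression_nonunimodular} or cite \cite{CS,Te} for it explicitly.
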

\begin{rem}
\emph{The proof of Lemma~\ref{lem:local_isomorphic_embedding} that we present was kindly communicated to us by \'Eric Ricard. Our original proof was more complicated, but worked whenever $U$ is a relatively compact subset of $V W^{-1}$. The simpler version above is however enough to prove the main implication \eqref{item:localHerzSchur}$\Rightarrow$\eqref{item:localFourier} in Theorem~\ref{thm:local_transference_general}, and by using a partition of the unity argument in the Fourier algebra of $\G$ \cite{Ey}, it is not hard to deduce that this implication holds actually whenever $U$ is a relatively compact subset of $VW^{-1}$.}
\end{rem}
For the reader's convenience, we first prove Lemma~\ref{lem:local_isomorphic_embedding} and Theorem~\ref{thm:local_transference_general} for unimodular groups, and explain in the next paragraph how to modify the definition of Fourier multiplier and the argument for nonunimodular groups.
\begin{proof}[Proof of Lemma~\emph{\ref{lem:local_isomorphic_embedding}} for $\G$ unimodular]  Translating $V$ and $W$, we can assume that the identity belongs to $V$ and $W$ and $g_0=e$. Let $U$ be a neighbourhood of $e$ such that $U \subset V$ and $U^{-1} U \subset W$. Let $\phi = \frac{1}{|U|} \chi_U$ and $\psi = \chi_{U^{-1} U}$, so that $$\int \phi(g h) \psi(h) dh = 1 \quad \mbox{for every $g \in U$.}$$ Consider the map 
\[J_p\colon L_p(\cL \G) \ni x \mapsto \phi^{\frac 1 p} x \psi^{\frac 1 p} \in S_p(L_2(V),L_2(W)),\] 
where we identify $\phi,\psi$ with the operators of multiplication by $\phi,\psi$. The convention is that $0^{\frac 1 \infty} = 0$. We claim that the maps $J_p$ are completely bounded with cb-norm \[ \|J_p\|_{\mathrm{cb}(L_p,S_p)} \leq \|\phi\|_{L_2(\G)}^{\frac 1 p} \|\psi\|_{L_2(\G)}^{\frac 1 p}\] whenever $1 \le p \le \infty$. By interpolation, it is enough to justify the extreme cases $p=1$ and $p=\infty$. The case $p = \infty$ is clear. For the case $p=1$, we factorize $x = x_1 x_2$ so that $J_1(x) = \phi x_1 \cdot x_2 \psi$. Take them so that $\|x\|_1 = \|x_1\|_2 \|x_2\|_2$, and it suffices to show that both factors are bounded in $S_2(L_2(\G)) \simeq L_2(\G \times \G)$ by $\|\phi\|_{L_2(\G)} \|x_1\|_{L_2(\cL \G)}$ and $\|\psi\|_{L_2(\G)} \|x_2\|_{L_2(\cL \G)}$ respectively. Using that \[J_\infty: x \mapsto \Big( \widehat{x}(gh^{-1}) \Big), \] the expected bounds follow from Plancherel theorem $L_2 (\cL \G) \simeq L_2(\G)$. Moreover when $x= \lambda(f)$, the operator $J_p(x)$ has kernel $(\phi(g)^{1/p} f(gh^{-1}) \psi(h)^{1/p})$. Thus it is clear that the map $J_p$ intertwines the Fourier multiplier with symbol $g \mapsto m(g)$ and the Schur multiplier with symbol $(g,h) \mapsto m(gh^{-1})$.
  
The inequality $C^{-1}\|x\|_p \leq \|J_p(x)\|_p$ is a bit more involved. Let $f \in M_n \otimes \mathcal{C}_c(U)$ with $\lambda(f) \in L_p(M_n \otimes \cL \G)$ and assume that $x = \lambda(f)$ by density. Let $q$ be the conjugate exponent of $p$ and $\gamma \in M_n \otimes \mathcal{C}_c(\G)$ with $\lambda(\gamma) \in L_q(M_n \otimes \cL \G)$. Then we have 
\begin{align*} \Tr \otimes \Tr_n \big( J_p(\lambda(f)) J_{q} (\lambda(\gamma))^* \big) & = \Tr_n \int_{\G \times \G} \phi(g) f \gamma^*(gh^{-1})  \psi(h) ds dt\\
    & = \int_\G \Tr_n \big( f(g) \gamma(g)^* \big) \Big[ \int_\G \phi(gh) \psi(h) \, dh \Big] \, dg\\
    & = \int_\G \Tr_n \big( f(g) \gamma(g)^* \big) \, dg = \tau\otimes \Tr_n \big( \lambda(f) \lambda(\gamma)^* \big).
  \end{align*}
In the last line, we used that $\int_\G \phi(gh) \psi(h) \, dh=1$ on $\mathrm{supp} f \subset U$. By H\"older's inequality, we get
  \begin{align*} 
  \big| \tau\otimes \Tr_n( \lambda(f) \lambda(\gamma)^*) \big| & \leq \|J_p(\lambda(f))\|_p \|J_q(\lambda(\gamma)\|_q\\
    & \leq \|J_q\|_{\mathrm{cb}} \|J_p(\lambda(f))\|_p \|\lambda(\gamma)\|_q\\
    & \leq \|J_q\|_{\mathrm{cb}}  \|\lambda(\gamma)\|_q \|J_p(\lambda(f))\|_p.
  \end{align*}
  Taking the sup over $\gamma$, we get $C^{-1}\|x\|_p \leq \|J_p(x)\|_p$ for $C =  \|J_q\|_{\mathrm{cb}}< \infty$.
\end{proof}

\noindent With the same argument as in \cite{PRS}, we deduce:

\begin{proof}[Proof of Theorem~\emph{\ref{thm:local_transference_general}}] The implication \eqref{item:localFourier}$\Rightarrow$\eqref{item:extensionFourier} is easy. Indeed, if \eqref{item:localFourier} holds and $\varphi \in A(\G)$ is supported in $U$ and equal to $1$ on a neighbourhood of $g_0$ (for the construction of $\varphi$, see the proof of Lemma~\ref{lem:local_isomorphic_embedding}), then $T_\varphi$ is completely bounded on $L_p(\cL \G)$ for every $1 \leq p \leq \infty$, and takes values in the space of elements Fourier supported in $U$. In particular, $T_{m\varphi} = T_{m,U}\circ T_\varphi$ is also completely bounded. The implication \eqref{item:extensionFourier}$\Rightarrow$\eqref{item:localHerzSchur} follows from \cite[Theorem 4.2]{CS} which implies that $$(g,h) \in \G \times \G \mapsto m(gh^{-1})\varphi(gh^{-1})$$ defines a completely bounded Fourier $L_p$-multiplier. Thus, we get \eqref{item:localHerzSchur} if $V,W$ are chosen so that $\varphi=1$ on $V W^{-1}$. Finally, \eqref{item:localHerzSchur}$\Rightarrow$\eqref{item:localFourier} follows from Lemma~\ref{lem:local_isomorphic_embedding}.\end{proof}
\subsection{Nonunimodular groups}\label{sec:nonunimodular}
Let $\G$ be an arbitrary locally compact group with modular function $\Delta\colon \G \to \R_+$. Our choice of $\Delta$ is characterized by the following identity for all $f \in \mathcal{C}_c(\G)$ 
\[ \int_\G f(hg) \, dh = \Delta(g)^{-1} \int_\G f(h) \, dh.\]
When $\G$ is not unimodular |that is, $\Delta$ is not the constant $1$ function| the natural weight $\lambda(f)^* \lambda(f) \mapsto \int |f|^2$ on $\cL \G$ is not tracial. Even when $\cL \G$ is semifinite, it is better to work with the general definition of $L_p$ spaces associated to a von Neumann algebra. Several concrete descriptions are possible: Haagerup's original one \cite{Ha}, Kosaki's complex interpolation \cite{Ko}, Connes-Hilsum's \cite{Co,Hi}... see \cite{PX2}. Here we will use the Connes-Hilsum spatial description because we want to rely on some results from \cite{CS,Te}, to which we refer for precise definitions. In that case, $L_p(\cL \G)$ is realized as a space of unbounded operators on $L_2(\G)$.

In \cite{CS}, Caspers and the second-named author defined Fourier $L_p$-multipliers for symbols that ensure that the Fourier multiplier is completely bounded for every $1 \leq p \leq \infty$. Here we extend the definition, allowing to talk about Fourier multipliers for a single $p$, and possibly only bounded. The shortest way to do so properly in this context is by using Terp's Hausdorff-Young inequality \cite{Te}. 

Informally, a typical element of $L_p(\cL \G)$ is of the form $\lambda(f) \Delta^{\frac 1 p}$ |where we are identifying the function $\Delta$ with the densely defined operator of multiplication by $\Delta$ on $L_2(\G)$| for some \emph{suitable} function $f$. Keeping at the informal level, the Fourier multiplier with symbol $m \colon \G\to\C$ should be, whenever it exists, the operator acting as follows \[ \lambda(f)\Delta^{\frac 1 p} \mapsto \lambda(mf)\Delta^{\frac 1 p}.\] Making this definition precise requires some lengthy and unpleasant discussions about domains/cores of unbounded operators, but fortunately we can rely on the results from \cite{Te}, where these discussions have been performed. We shall need to distinguish the cases $p\geq 2$ and $p \leq 2$. Let $q=\frac{p}{p-1}$ be the conjugate exponent of $p$. When $p\geq 2$, the Fourier transform 
\[\mathcal{F}_q \colon L_q(\G) \to L_p(\cL \G)\] is an injective norm $1$ linear map with dense image, where $\mathcal{F}_q(f)$ is defined as a suitable extension of $\lambda(f) \Delta^{1/p}$, see \cite[Theorem 4.5]{Te}. When $p\leq 2$, the adjoint of $\mathcal{F}_p$ gives a norm $1$ injective map with dense image $\overline{\mathcal F}_p\colon L_p(\cL \G) \to L_q(\G)$. 
If $I_q$ denotes the isometry of $L_q(\G)$ defined by $I_q(f)(g) = f(g^{-1}) \Delta(g)^{-1/q}$, we know from \cite[Proposition 1.15]{Te} that every element $x$ of $L_p(\cL \G)$ is a suitable extension of $\lambda(f) \Delta^{1/p}$ for $f = I_q \circ \overline{\mathcal F}_p(x) \in L_q(\G)$. In the particular case $p=2$, these two statements together yield Plancherel's formula : $\mathcal F_2$ is a  unitary. If $p=1$, the image of $\overline{\mathcal{F}}_1$ is the Fourier algebra $A(\G)$, and following standard notation we write 
\begin{equation}\label{eq:def_trace_on_L1}
    \tr(x) = \varphi(e) \quad \textrm{ if } \quad \overline{\mathcal{F}}_1(x)=\varphi.
\end{equation}

\begin{defn}
Let $1 < p < \infty$ and $m \in L_\infty(\G)$. We say that $m$ defines a bounded Fourier $L_p$-multiplier when the condition below holds according to the value of $p \hskip-2pt:$ 
\begin{itemize}
    \item Case $p \geq 2$. The map \[\mathcal{F}_{q}(f) \mapsto \mathcal{F}_{q}(mf)\] (densely defined on $\mathcal{F}_q( L_q(\G))$) extends to a bounded map $T_m$ on $L_p(\cL \G)$.
    
    \vskip3pt
    
    \item Case $p \leq 2$. The multiplication by $m$ preserves the image of $I_q \circ \overline{\mathcal F}_p$ when the map $T_m:x\mapsto (I_q \circ \overline{\mathcal F}_p)^{-1}( m (I_q \circ \overline{\mathcal F}_p(x)))$ is a bounded map on $L_p(\cL \G)$.
\end{itemize}
We say that $m$ defines a completely bounded Fourier $L_p$-multiplier when $m$ defines a bounded Fourier $L_p$-multiplier and the Fourier multiplier $T_m$ is completely bounded. 
\end{defn}
It follows from the above definition that $m$ defines a (completely) bounded $L_p$ multiplier if and only it defines a (completely) bounded $L_q$ multiplier, and in that case 
\[ \tr( T_m(x) y^*) = \tr( x (T_{\overline{m}}(y))^*) \quad \mbox{for all} \quad x \in L_p(\cL \G), y \in L_q(\cL \G).\]

Once we have polished the definition of Fourier $L_p$-multipliers in nonunimodular group von Neumann algebras, we can extend a Cotlar identity from \cite{GPX} to arbitrary locally compact groups. 
\begin{example}\label{ex:cotlar} Let $\G \to \mathrm{Homeo}_+(\R)$ be a continuous action of a connected Lie group. Then, the indicator function $m$ of $\{g \in \G \mid g \cdot 0>0\}$ defines a completely bounded $L_p$ Fourier multiplier on $G$ with completely bounded norm $\leq 2 \max \{p,\frac{p}{p-1}\}$.
\end{example}
\begin{proof}
Let $m$ be the indicator function of $\{g \in \G \mid g \cdot 0>0\}$. It suffices to prove the following implication for every $2 \leq p<\infty$: if $m$ defines a completely bounded Fourier $L_p$-multiplier with norm $\leq C_p$, then it defines a completely bounded Fourier $L_{2p}$-multiplier with norm $\leq 2 C_p$. Indeed, using that $C_2=\|m\|_\infty=1$, we deduce $C_{2^N} \leq 2^N$ for every integer $N$, so by interpolation $C_p \leq 2p$ for all $p\geq 2$. By duality the conclusion also holds for $p\leq 2$.

Let $r$ be the dual exponent of $2p$. Let $f \in \mathcal{C}_c(\G)$ and consider $X =\mathcal{F}_r(f)$ and $Y=\mathcal F_r(mf)$; these are well-defined elements of $L_{2p}(\cL G)$ by \cite{Te}. Then, we claim that the equality below holds 
\begin{equation} \label{Eq-Cotlar}
Y^* Y = T_m(Y^*X) + T_m(Y^*X)^*.
\end{equation}
Indeed, this inequality is equivalent to the almost everywhere equality \[(mf)^* \ast (mf) = m \big( (mf)^* \ast f \big) + \big( m \big( (mf)^* \ast f \big)\big)^*,\] which follows for the fact that $m(g^{-1}) m(g^{-1}h) = m(h) m(g^{-1}) + m(h^{-1})m(g^{-1}h)$ for almost every $g,h \in \G$. If the whole group $\G$ fixes $0$, this is obvious because $m$ is identically $0$. Otherwise, the stabilizer of $0$ is a closed subgroup, so it has measure $0$ and it is enough to justify the equality for $h \cdot 0 \neq 0$. Set $(\alpha,\beta) = (g \cdot 0 ,h \cdot 0)$ and observe that $m(g^{-1}) m(g^{-1}h)=1$ if and only if $\alpha < \min \{0, \beta\}$. Similarly, we have $m(h) m(g^{-1}) = 1$ iff $\alpha < 0 < \beta$ and $m(h^{-1})m(g^{-1}h)=1$ iff $\alpha < \beta < 0$. Therefore the expected identity reduces to the trivial one $\chi_{\alpha < 0 \wedge \beta} = \chi_{\alpha < 0 < \beta} + \chi_{\alpha < \beta < 0}$. This justifies \eqref{Eq-Cotlar}, both sides of which are in $L_{p}(\cL \G)$. Thus, taking the norm and applying the triangle inequality, the hypothesis and H\"older's inequality leads to
\[ \|Y\|_{2p}^2 \leq 2 C_{p} \|X\|_{2p} \|Y\|_{2p}.\]
We deduce $\|Y\|_{2p} \leq 2 C_{p} \|X\|_{2p}$. Since $\mathcal{C}_c(\G)$ is dense in $L_r(\G)$, we obtain that $m$ defines a Fourier $L_{2p}$-multiplier with norm $\leq 2C_p$. A similar argument gives the same bound for the completely bounded norm, which concludes the proof.\end{proof}

\begin{rem}
\emph{The Cotlar-type identity from \cite{GPX} is refined in some cases by \eqref{Eq-Cotlar}.}
\end{rem}

\noindent The following summarizes the properties that we need.

\begin{lem}\label{lem:compression_nonunimodular}
Let $1 \leq p \leq \infty$ and consider functions $\phi,\psi \in L_{2p}(\G)$, which we identify with $($possibly unbounded$)$ multiplication operators on $L_2(\G)$. Then$\hskip1pt :$ 
\begin{itemize}
    \item Given $x\in L_{2p}(\cL \G)$, $x \phi$ is densely defined and closable. In fact, its closure $[x\phi]$ belongs to $S_{2p}(L_2(\G))$ and has $S_{2p}$-norm $\leq \|\phi\|_{L_{2p}(\G)} \|x\|_{L_{2p}(\cL \G)}$.
    \item There exists a bounded linear map $L_p(\cL \G) \to S_p(L_2(\G))$\footnote{That, with a slight abuse of notation, we denote $z \mapsto \psi z \phi$.} sending $y^* x$ to $[y \psi^*]^* [x \phi]$ for every $x,y \in L_{2p}(\cL \G)$. It has norm $\leq \|\phi\|_{L_{2p}(\G)} \|\psi\|_{L_{2p}(\G)}$.
    \item If $q$ denotes the conjugate exponent of $p$, consider $\phi',\psi' \in L_{2q}(\G)$ and $y \in L_{q}(\cL \G)$. Then, we have 
\begin{equation}\label{eq:duality_braket}
\Tr \big( \phi x \psi (\phi' y \psi')^* \big) = \tr( T_m(x) y^*),%{\color{blue} \big\langle T_m(x),y \big\rangle},
\end{equation}
    where $m \in A(\G)$ is the function
    $\displaystyle m(g) = \int_\G (\phi\overline{\phi'})(h) (\psi \overline{\psi'})(g^{-1} h) \, dh$.
%    \begin{equation}\label{eq:duality_braket}
%    \Tr( \phi x \psi \phi' y \psi') = \langle x,T_m(y)\rangle,
%    \end{equation}
%    where $m \in A(G)$ is the function
%    $m(g) = \int (\phi\psi')(t) (\phi' \psi)(g^{-1} t) dt$.
\end{itemize}
\end{lem}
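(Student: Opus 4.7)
The approach is to handle the three parts sequentially, with bilinear complex interpolation serving as the key technical tool.

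For Part 1, the plan is to prove the norm estimate $\|[x\phi]\|_{S_{2p}} \leq \|x\|_{L_{2p}(\cL\G)}\|\phi\|_{L_{2p}(\G)}$ by interpolating between the two endpoints. At $p=\infty$, both $x$ and $\phi$ are honestly bounded operators on $L_2(\G)$ (the latter as a multiplication operator), and the product satisfies the trivial norm bound. At $p=1$, by Plancherel and Terp's results \cite{Te}, every $x \in L_2(\cL\G)$ is the closure of $\lambda(f)\Delta^{1/2}$ for some $f\in L_2(\G)$ with $\|f\|_2=\|x\|_{L_2(\cL\G)}$. A direct computation identifies $x\phi$ with the integral operator whose kernel is $K(g,h)=f(gh^{-1})\Delta(h)^{1/2}\phi(h)$, and the modular identity $\int_\G|f(gh^{-1})|^2\,dg=\Delta(h)^{-1}\|f\|_2^2$ (which is the defining property of $\Delta$ after the change of variable $u=gh^{-1}$) yields $\|K\|_{L_2(\G\times\G)}=\|f\|_2\|\phi\|_2$, proving $[x\phi]\in S_2$ with the correct norm. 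Bilinear complex interpolation, using the identifications $L_{2p}(\cL\G)=(\cL\G,L_2(\cL\G))_{1/p}$ and $L_{2p}(\G)=(L_\infty(\G),L_2(\G))_{1/p}$, then covers the full range $1\le p\le\infty$.

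For Part 2, H\"older's inequality in Schatten classes combined with Part 1 gives directly
\[\|[y\psi^*]^*[x\phi]\|_{S_p} \leq \|[y\psi^*]\|_{S_{2p}}\,\|[x\phi]\|_{S_{2p}} \leq \|\psi\|_{L_{2p}(\G)}\|\phi\|_{L_{2p}(\G)}\|y\|_{L_{2p}(\cL\G)}\|x\|_{L_{2p}(\cL\G)},\]
so the prescription $(y,x)\mapsto[y\psi^*]^*[x\phi]$ defines a bounded bilinear map $L_{2p}(\cL\G)\times L_{2p}(\cL\G)\to S_p$. To see that it descends to a well-defined linear map on $L_p(\cL\G)$ via the product $(y,x)\mapsto y^*x$, I would again interpolate: at $p=\infty$ this map is manifestly the operation $z\mapsto\psi z\phi$ of bounded multiplications, and at $p=1$ the independence of factorization follows directly from the explicit kernels provided by Part 1. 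Interpolating between these two well-defined endpoints gives the map for every $p$.

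For Part 3, the identity \eqref{eq:duality_braket} I would verify first on the dense subspace of elements of the form $x=\mathcal{F}_q(f)$ and $y=\mathcal{F}_p(g)$ with $f,g\in\mathcal{C}_c(\G)$ (say, assuming $p\ge 2$ for definiteness; the case $p\le 2$ is symmetric by duality). On this subspace, Part 1 provides the explicit kernel $(\phi x\psi)(g,h)=\phi(g)f(gh^{-1})\Delta(h)^{1/p}\psi(h)$, and an analogous formula with $1/p$ replaced by $1/q$ for $\phi'y\psi'$. Computing the Schatten trace $\Tr(\phi x\psi(\phi'y\psi')^*)$ as the $L_2$ pairing of the two kernels, and then applying Fubini together with the change of variables $u=gh^{-1}$ (which contributes a factor $\Delta(h)^{-1}$), one collects the various $\Delta$ exponents into an expression of the form $\int_\G m(u)f(u)\overline{g(u)}\,du$ for the function $m$ of the statement. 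On the other side, unfolding $\tr(T_m(x)y^*)$ via \eqref{eq:def_trace_on_L1} and Terp's identification produces the same integral. A continuity argument --- first in the pair $(f,g)$ using Parts 1 and 2, then in the weights $(\phi,\psi,\phi',\psi')$ --- finishes the proof. The hardest part will be the bookkeeping of the various powers $\Delta^{1/p},\Delta^{1/q},\Delta^{-1}$ that appear along the way, since the lemma is essentially the assertion that they cancel to produce exactly the function $m$ of the statement.
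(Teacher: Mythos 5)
Your strategy is sound and would yield a complete proof, but it diverges from the paper's in instructive ways. For the first two items the paper simply invokes \cite[Proposition 3.3, Theorem 5.2]{CS} (proved there for indicator functions) and notes that the same argument applies; your reconstruction via bilinear interpolation between the $S_\infty$ endpoint and an explicit Hilbert--Schmidt kernel computation at $S_2$ is essentially that argument, so here you are filling in details the paper outsources. For the third item the routes genuinely differ: the paper first observes that $\phi\overline{\phi'},\psi\overline{\psi'}\in L_2(\G)$ forces $m\in A(\G)$, hence $T_m$ is completely bounded on $L_1$ and $L_\infty$, interpolates the \emph{identity} \eqref{eq:duality_braket} down to the two endpoints, and at $p=\infty$ reduces by weak-$*$ density to the single computation $x=\lambda(g_0)$, where both sides visibly equal $m(g_0)\overline{f(g_0^{-1})}$ with no modular factors surviving. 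Your plan instead verifies the identity at general $p$ on the dense class $\mathcal{F}_q(\mathcal{C}_c(\G))$ and extends by continuity; this avoids the interpolation step but pushes all of the $\Delta$-bookkeeping into a single computation at general $p$, which is exactly where errors creep in. Indeed, with the paper's normalization $\int_\G f(hg)\,dh=\Delta(g)^{-1}\int_\G f(h)\,dh$, your two displayed intermediate formulas both carry the wrong sign of the modular exponent: the kernel of $\lambda(f)\Delta^{1/2}\phi$ is $f(gh^{-1})\Delta(h)^{-1/2}\phi(h)$ and the translation identity reads $\int_\G|f(gh^{-1})|^2\,dg=\Delta(h)\,\|f\|_2^2$; the two errors cancel, so your $S_2$ norm identity is still correct, but this is a warning sign for the part-3 computation you defer. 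Two further points deserve more care than you give them: the well-definedness of $y^*x\mapsto[y\psi^*]^*[x\phi]$ on $L_1$ (independence of the factorization) needs an actual kernel argument rather than an assertion, and your density/continuity step at the endpoint $p=\infty$ must be run in the weak-$*$ topology, which is precisely why the paper isolates that case and reduces to $x=\lambda(g_0)$ there. None of these is a fatal gap, but the paper's reduction to the endpoints is what makes the modular cancellations trivial rather than a delicate computation.
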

%By a slight abuse, for $z \in L_p(\cL G)$ that is factored as $z = y^* x$ for $x,y \in L_{2p}(\cL G)$, we will denote $\psi z \phi$ the operator $[y \psi^*]^* [x \phi]$.
\begin{proof}
When $\phi=\psi$ are indicator functions, the first two points were proved in \cite[Proposition 3.3, Theorem 5.2]{CS}. The same argument applies in our case. Let us justify identity \eqref{eq:duality_braket}. First observe that $\phi \overline{\phi'}$ and $\psi\overline{\phi'}$ belong to $L_2(\G)$ by Hölder's inequality, so that $m$ indeed belongs to $A(\G)$. In particular, $m$ defines a completely bounded $L_1$ and $L_\infty$ Fourier multiplier. Thus, it also defines a Fourier $L_p$-multiplier \cite[Definition-Proposition 3.5]{CS}. Therefore, by interpolation \cite[Section 6]{CS} it suffices to prove \eqref{eq:duality_braket} for $p=1$ and $p=\infty$. These two cases are formally equivalent and we just consider $p=\infty$. In that case, $y\in L_1(\cL \G)$ corresponds to an element $f \in A(\G)$ and $\phi' y \psi'$ is the trace class operator with kernel 
\[ \Big( \phi'(g) f(hg^{-1}) \psi'(h) \Big)_{g,h \in \G},\] see \cite[Lemma 3.4]{CS}. By a weak-$*$ density argument, it is enough to prove \eqref{eq:duality_braket} for $x=\lambda(g_0)$ for some $g_0 \in \G$. In that case, $T_m(x) = m(g_0) \lambda(g_0)$ and we obtain $\tr( T_m(x) y^*) = m(g_0) \overline{f(g_0^{-1})}$. We can compute
\begin{eqnarray*}
    \Tr \big( \phi x \psi (\phi' y \psi')^* \big) & = & \Tr \big( \lambda(g_0) \psi \overline{\psi'} y^* \phi \overline{\phi'} \big) \\ [5pt]
    & = & \Tr \Big[ \Big( \psi \overline{\psi'})(g_0^{-1}g) \overline{f(g_0^{-1} g h^{-1} )} (\phi \overline{\phi'})(h) \Big)_{g,h \in \G} \Big]\\
    & = & \int_\G (\psi \overline{\psi'})(g_0^{-1}g) \overline{f(g_0^{-1})} (\phi \overline{\phi'})(g) \, dg = m(g_0) \overline{f(g_0^{-1})}.
\end{eqnarray*}
%Let us explain \eqref{eq:duality_braket}. First observe that $\phi \psi'$ and $\phi' \psi$ belong to $L_2(G)$ by Hölder's inequality, so that $m$ indeed belongs to $A(G)$. In particular, $m$ defines a completely bounded $L_\infty$ and $L_1$ Fourier multiplier, and therefore also an $L_p$ Fourier multiplier by interpolation (see \cite[Definition-Proposition 3.5]{CS}). Therefore, by interpolation (see \cite[Section 6]{CS} for details) it is enough to prove \eqref{eq:duality_braket} in the special case when $p=1$ or $p=\infty$. These two cases are formally equivalent, so let us consider $p=1$. In that case, $x \in L_1(\cL G)$ corresponds to an elements $f \in A(G)$, and in particular such that $\phi x \psi$ is the trace class operator with kernel$(\phi(s) f(ts^{-1}) \psi(t))_{s,t \in G}$, see \cite[Lemma 3.4]{CS}. By a weak-* density argument, it is enough to prove \eqref{eq:duality_braket} for $y=\lambda(g)$ for some $g \in G$. In that case, $T_m(y) = m(g) \lambda(g)$ and $\langle x,T_m(y)\rangle = m(g) f(g)$ by definition of the duality bracket between $L_1(\cL G)$ and $\cL G$. We can compute
%\begin{align*}
%    \Tr( \phi x \psi \phi' y \psi') &=\Tr(\lambda(g) \psi' \phi x \psi \phi')\\
%    & = \Tr ( (\psi' \phi)(g^{-1}s) f(ts^{-1} g) (\psi \phi')(t))_{s,t \in G})\\
%    & = \int (\psi' \phi)(g^{-1}s) f(s s^{-1} g) (\psi \phi')(s) ds = m(g) f(g).
%\end{align*}
This justifies the identity \eqref{eq:duality_braket} and completes the proof of the lemma.
\end{proof}
%\mdls{Explain that with Martijn, we had an uncomplete def of Fourier mulitpliers, and we correct it now.}We say that a bounded measurable function $m\colon G\to\R$ defines a (completely) bounded $L_p$ Fourier multiplier if there is a (completely) bounded operator $T\colon L_p(\cL G)\to L_p(\cL G)$ such that $\phi T(x) \psi = S_m(\phi x \psi)$ for every $x \in L_p(\cL G)$ and $\phi,\psi \in L_{2p}(G)$. Such an operator is necessarily unique, is called the Fourier $L_p$-multiplier with symbol $m$, and is denoted $T_m$. For a justification of this definition, recall that if $m$ is a completely bounded multiplier of $A(G)$, then it defines a completely bounded $L_p$ Fourier multiplier for every $1 \leq p \leq \infty$, see \cite[Propsition-Definition 3.5]{CS}.

\noindent Lemma~\ref{lem:compression_nonunimodular} allows us to adapt the proof of Lemma~\ref{lem:local_isomorphic_embedding} from the unimodular case.

\begin{proof}[Proof of Lemma~\emph{\ref{lem:local_isomorphic_embedding}}, general case] 
We take $\phi,\psi \in \mathcal{C}_c(\G)$ as in the proof in the unimodular case, and set $m(g) = \int \phi(gh) \psi(h) dh$. By Lemma~\ref{lem:compression_nonunimodular}, we can define completely bounded maps 
\[J_p\colon L_p(\cL \G) \ni x \mapsto \phi^{\frac 1 p} x \psi^{\frac 1 p} \in S_p(L_2(V),L_2(W)),\] 
which intertwine Fourier and Schur multipliers. Now, if $x \in L_p(M_n \otimes \cL \G)$ is Fourier supported in $U$ and $y \in L_q(M_n \otimes \cL \G)$ |for $q$ being the dual exponent of $p$| we get 
\begin{align*}
    \tr(x y^*) & = \tr\big( T_{m}(x) y^*\big) 
    = \Tr \big( J_p(x) J_q (y)^* \big)\\
    & \leq \|J_p(x)\|_{S_p} \|J_q(y)\|_{S_q} \leq \|J_q\|_{\mathrm{cb}} \|y\|_{L_q(\cL \G)} \|J_p(x)\|_{S_p}.
    \end{align*}
    The first line is because $m=1$ on $U$ and $x$ is Fourier supported in $U$, and by  \eqref{eq:duality_braket}. The last line is Hölder's inequality. Taking suprema over $y$ in the unit ball of $L_q(\cL \G)$ gives $\|x\|_{L_p(\cL \G)} \leq \|J_q\|_{\mathrm{cb}} \|J_p(x)\|_{S_p}$.
\end{proof} 
\subsection{The group $\SL_2(\R)$} Consider the symbol
\[m_0 \Big[ \begin{pmatrix} a & b\\c&d  \end{pmatrix} \Big] = \frac12 \Big( 1 + \mathrm{sgn} (ac+bd) \Big).\]
This was identified in \cite{GPX} as the canonical Hilbert transform in $\SL_2(\Z)$. Its complete $L_p$-boundedness follows for $1 < p < \infty$ from a Cotlar-type identity. The same problem in $\SL_2(\R)$ was left open in \cite[Problem A]{GPX}. Now this is solved by condition \eqref{item:Lie-subalg} in Theorem \ref{thm:main_Fourier}, which disproves cb-$L_p$-boundedness for any $p \neq 2$. On the other hand, according to Corollary B2, the map 
\[ m (g) = m \Big[ \begin{pmatrix} a & b\\c&d  \end{pmatrix} \Big] := \frac12 \big( 1 + \mathrm{sgn}(c) \big) = m_0 (gg^\mathrm{t}) \] does define, locally at every point of its boundary, a completely bounded Fourier $L_p$-multiplier for every $1<p<\infty$. But, is it globally $L_p$-bounded? Is it completely $L_p$-bounded as well? We leave these problems open for future attempts.

\subsection{Stratified Lie groups} A Lie algebra $\mathfrak{g}$ is called graded when there exists a finite family of subspaces $\mathrm{W}_1, \mathrm{W}_2, \ldots, \mathrm{W}_N$ of the Lie algebra satisfying conditions below  %\mdls{Is a stratified Lie group simply connected? Corollary B2 is about simply connected groups.{\bf Yes! I forgot to mention, it was in the former version but I deleted it. It is now mentioned again}}
\[\mathfrak{g} = \bigoplus_{j = 1}^N \mathrm{W}_j \qquad \mbox{and} \qquad [\mathrm{W}_j, \mathrm{W}_k] \subset \mathrm{W}_{j+k}.\] 
A simply connected Lie group $\G$ is called stratified when its Lie algebra $\mathfrak{g}$ is graded and the first stratum $\mathrm{W}_1$ generates $\mathfrak{g}$ as an algebra. Stratified Lie groups are nilpotent and include, among many other examples, Heisenberg groups. According to Corollary B2 Hilbert transforms are of the form $H \circ \varphi$, for the classical Hilbert transform $H$ and some continuous homomorphism $\varphi: \G \to \R$. A quick look at Theorem \ref{thm:main_Fourier} shows that $\varphi$ corresponds on the Lie algebra with the projection onto any 1-dimensional subspace of the first stratum, since codimension 1 Lie subalgebras are exactly those codimension 1 subspaces leaving out a vector in the first stratum.

%\printbibliography
%\bibliographystyle{plain_correctalpha} 
%\bibliography{biblio} 

\noindent \textbf{Acknowledgement.} The research of JP and ET was partially supported by the Spanish Grant PID2022-141354NB-I00 “Fronteras del An\'alisis Arm\'onico” (MCIN) as well as Severo Ochoa Grant CEX2019-000904-S (ICMAT), funded by MCIN/AEI 10.13039/501100011033. The research of MdlS was partially supported by the Charles Simonyi Endowment at the Institute for Advanced Study, and the ANR project ANCG Project-ANR-19-CE40-0002. ET was supported as well by Spanish Ministry of Universities with a FPU Grant with reference FPU19/00837.

\bibliographystyle{amsplain}

\enlargethispage{1cm}

\vskip5pt

\noindent \textbf{Javier Parcet} \\
\small{Instituto de Ciencias Matem\'aticas, CSIC} \\
\small{Nicolás Cabrera 13-15, 28049, Madrid, Spain} \\
\small{\texttt{parcet@icmat.es}}

\vskip2pt

\noindent \textbf{Mikael de la Salle} \\
\small{Universite Claude Bernard Lyon 1, CNRS, Ecole Centrale de Lyon} \\
\small{INSA Lyon, Université Jean Monnet, ICJ UMR5208, 69622 Villeurbanne, France} \\
\small{\texttt{delasalle@math.univ-lyon1.fr}}

\vskip2pt

\noindent \textbf{Eduardo Tablate} \\
\small{Instituto de Ciencias Matem\'aticas, CSIC} \\
\small{Nicolás Cabrera 13-15, 28049, Madrid, Spain} \\
\small{\texttt{eduardo.tablate@icmat.es}}
\end{document}